\definecolor{mred}{rgb}{0.7,0,0}
\definecolor{mblue}{RGB}{034,113,179}
\definecolor{grey}{RGB}{120,120,120}
\numberwithin{equation}{section}
\newtheorem{Theorem}{Theorem}[section]
\newtheorem{Lemma}[Theorem]{Lemma}
\newtheorem{Corollary}[Theorem]{Corollary}
\newtheorem{Proposition}[Theorem]{Proposition}
\theoremstyle{definition}
\theoremstyle{remark}
\newtheorem{Remark}[Theorem]{Remark}
\newcommand{\inc}{\hspace{3pt}\rule[0.5pt]{2mm}{0.5pt}\rule[0.5pt]{0.5pt}{4.5pt}\hspace{3pt}}
\def\Div{\mbox{\rm div}}
\def\Z{{\mathbb Z}}
\def\R{\mathbb{R}}
\def \e{\varepsilon}
\def \re{{\mathbb R}}
\def \C{{\mathbb C}}
\def \ov{\overline}
\def \0{\lambda_{0}}
\def\d{d}
\newcommand{\ee}{\mathrm{e}}
\renewcommand{\i}{i}
\newcommand{\higgs}{B}
\renewcommand{\Re}{\operatorname{Re}}
\newcommand{\hathiggs}{\hat{B}}
\newcommand{\D}{\mathrm{D}}
\newcommand{\area}{d\sigma}
\newcommand{\hatarea}{d\hat{\sigma}}
\newcommand{\weyl}{\D}
\newcommand{\hatweyl}{\hat{\D}}
\newcommand{\can}{K}
\newcommand{\acan}{K^*}
\begin{document}
\title[Convex Projective Surfaces with Compatible Weyl Connection]{Convex Projective Surfaces with Compatible Weyl Connection Are Hyperbolic}
\author[T.~Mettler]{Thomas Mettler}
\address{Institut f\"ur Mathematik, Goethe-Universit\"at Frankfurt, 60325 Frankfurt am Main, Germany}
\email{mettler@math.uni-frankfurt.de,mettler@math.ch}
\author[G.P.~Paternain]{Gabriel P.~Paternain}
\address{Department of Pure Mathematics and Mathematical Statistics,
University of Cambridge,
Cambridge CB3 0WB, England}
\email{g.p.paternain@dpmms.cam.ac.uk}

\date{27th February 2019}

\begin{abstract}
We show that a properly convex projective structure $\mathfrak{p}$ on a closed oriented surface of negative Euler characteristic arises from a Weyl connection if and only if $\mathfrak{p}$ is hyperbolic. We phrase the problem as a non-linear PDE for a Beltrami differential by using that $\mathfrak{p}$ admits a compatible Weyl connection if and only if a certain holomorphic curve exists. Turning this non-linear PDE into a transport equation, we obtain our result by applying methods from geometric inverse problems. In particular, we use an extension of a remarkable $L^2$-energy identity known as Pestov's identity to prove a vanishing theorem for the relevant transport equation.  
\end{abstract}

\maketitle

\section{Introduction}

A~\textit{projective structure} on a smooth manifold $M$ is an equivalence class $\mathfrak{p}$ of torsion-free connections on its tangent bundle $TM$, where two such connections are declared to be projectively equivalent if they share the same unparametrised geodesics. The set of torsion-free connections on $TM$ is an affine space modelled on the sections of $S^2(T^*M)\otimes TM$.  By a classical result of Cartan, Eisenhart, Weyl (see~\cite{spivakvol2} for a modern reference), two connections are projectively equivalent if and only if their difference is pure trace. In particular, it follows from the representation theory of $\mathrm{GL}(2,\R)$ that a projective structure on a surface $M$ is a section of a natural affine bundle of rank $4$ whose associated vector bundle is canonically isomorphic to $V=S^3(T^*M)\otimes \Lambda^2(TM)$. Choosing an orientation and Riemannian metric $g$ on $M$, the bundle $V$ decomposes into irreducible $\mathrm{SO}(2)$-bundles $V\simeq T^*M\oplus S^3_0(T^*M)$, where the latter summand denotes the totally symmetric $(0,\! 3)$ tensors on $M$ that are trace-free with respect to $g$, or equivalently, the cubic differentials with respect to the complex structure $J$ induced by $g$ and the orientation. In other words, fixing an orientation and Riemannian metric $g$ on $M$, a projective structure $\mathfrak{p}$ may be encoded in terms of a unique triple $(g,A,\theta)$, where $A$ is a cubic differential -- and $\theta$ a $1$-form on $M$. A conformal change of the metric $g \mapsto \ee^{2u}g$ corresponds to a change 
$$(g,A,\theta)\mapsto (\ee^{2u}g,\ee^{2u}A,\theta+\d u).$$ Consequently, the section $\Phi=A/d\sigma$ of $\can^2\otimes\ov{\acan}$ does only depend on the complex structure $J$. Here $d\sigma$ denotes the area form of $g$ and $\can$ the canonical bundle of $M$. In addition, we obtain a connection $\D$ on the anti-canonical bundle $\acan$ inducing the complex structure by taking the Chern connection with respect to $g$ and by subtracting twice the $(1,\! 0)$-part of $\theta$. Again, the connection $\D$ does only depend on $J$. Fixing a complex structure $J$ on $M$ thus encodes a given projective structure $\mathfrak{p}$ in terms of a unique pair $(\D,\Phi)$.

There are two special cases of particular interest. Firstly, we can find a complex structure $J$ so that $\D$ is the Chern connection of a metric in the conformal class determined by $J$. This amounts to finding a complex structure for which $\theta$ is exact. Secondly, we can find a complex structure $J$ so that $\Phi$ vanishes identically. This turns out to be equivalent to $\mathfrak{p}$ containing a~\textit{Weyl connection} for the conformal structure $[g]$ determined by $J$, that is, a torsion-free connection on $TM$ whose parallel transport maps are angle preserving with respect to $[g]$. 

In~\cite{MR3144212}, it is shown that a two-dimensional projective structure $\mathfrak{p}$ does locally always contain a Weyl connection and moreover, finding the Weyl connection turns out to be equivalent to finding a holomorphic curve into a certain complex surface $Z$ fibering over $M$. Here we use this observation to rephrase the problem in terms of a non-linear PDE for a Beltrami differential. More precisely, we think of $\mathfrak{p}$ as being given on a Riemann surface $(M,J)$ in terms of $(\D,\Phi)$. We show (cf.~Proposition~\ref{ppn:mainpdecplx}) that $\mathfrak{p}$ contains a Weyl connection with respect to the complex structure defined by the Beltrami differential $\mu$ on $(M,J)$ if and only if
\begin{equation}
\D^{\prime\prime}\mu-\mu\,\D^{\prime}\mu=\Phi\mu^3+\ov{\Phi},
\label{eq:nonlinear}
\end{equation}
where $\D^{\prime}$ and $\D^{\prime\prime}$ denote the $(1,\! 0)$ -- and $(0,\! 1)$-part of $\D$. 
Since every two-dimensional projective structure locally contains a Weyl connection, the above PDE for the Beltrami differential $\mu$ can locally always be solved. Moreover, on the $2$-sphere every solution $\mu$ lies in a complex $5$-manifold of solutions, whereas on a closed surface of negative Euler characteristic the solution is unique, provided it exists, see~\cite{MR3384876} (and Corollary~\ref{cor:georig} below). 

Here we address the problem of finding a projective structure $\mathfrak{p}$ for which the above PDE has no global solution. Naturally, one might start by looking at projective structures $\mathfrak{p}$ at ``the other end'', that is, those that arise from pairs $(\D,\Phi)$ where $\D$ is the Chern connection of a conformal metric, or equivalently, those for which there exists a metric $g$ so that $\mathfrak{p}$ is encoded in terms of the triple $(g,A,0)$. This class of projective structures includes the so-called~\textit{properly convex projective structures}. A projective surface $(M,\mathfrak{p})$ is called properly convex if it arises as a quotient of a properly convex open set $\Omega\subset\mathbb{RP}^2$ by a free and cocompact action of a group $\Gamma\subset \mathrm{SL}(3,\R)$ of projective transformations. In particular, using the Beltrami--Klein model of two-dimensional hyperbolic geometry, it follows that every closed hyperbolic Riemann surface is a properly convex projective surface. Motivated by Hitchin's generalisation of Teichm\"uller space~\cite{MR1174252}, Labourie~\cite{MR2402597} and Loftin~\cite{MR1828223} have shown independently that on a closed oriented surface $M$ of negative Euler characteristic every properly convex projective structure arises from a unique pair $(g,A,0)$, where $g$ and $A$ are subject to the equations
$$
K_g=-1+2|A|^2_g \quad \text{and}\quad \ov{\partial} A=0. 
$$
Using quasilinear elliptic PDE techniques, C.P.~Wang previously showed~\cite{MR1178538} (see also~\cite{MR3432157}) that the metric $g$ is uniquely determined in terms of $([g],A)$ by the equation for the Gauss curvature $K_g$ of $g$. Consequently, Labourie, Loftin conclude that on $M$ the properly convex projective structures are in bijective correspondence with pairs $([g],A)$ consisting of a conformal structure and a cubic holomorphic differential. 

Naturally one might speculate that~\eqref{eq:nonlinear} does not admit a global solution for a properly convex projective structure $\mathfrak{p}$ unless $A$ vanishes identically, in which case $\mathfrak{p}$ is hyperbolic. This is indeed the case:
\setcounter{section}{6}
\setcounter{Theorem}{1}
\begin{Corollary}
Let $(M,\mathfrak{p})$ be a closed oriented properly convex projective surface with $\chi(M)<0$ and with $\mathfrak{p}$ containing a Weyl connection $\weyl$. Then $\mathfrak{p}$ is hyperbolic and moreover $\weyl$ is the Levi-Civita connection of the hyperbolic metric.  
\end{Corollary}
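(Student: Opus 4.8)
The plan is to reduce the statement to a vanishing theorem for a first-order transport equation on a unit tangent bundle, and then to prove that vanishing theorem by an $L^2$-energy identity of Pestov type adapted to the thermostat flow carried by a holomorphic cubic differential.

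\emph{Reduction.} By the Labourie--Loftin theorem, $\mathfrak{p}$ is encoded by a unique pair $([g],A)$ where, relative to the complex structure $J$ underlying $[g]$, the cubic differential $A$ is holomorphic and $g$ is the unique metric in $[g]$ with $K_g=-1+2|A|^2_g$; relative to $J$ one then has $\mathfrak{p}\leftrightarrow(\weyl,\Phi)$ with $\Phi=A/\area$ and with the $1$-form of $\mathfrak{p}$ equal to zero. Hence it suffices to prove $A\equiv0$: this forces $K_g=-1$, so $g$ is the hyperbolic metric $g_0$ by uniqueness of the hyperbolic metric in a conformal class, so $\mathfrak{p}$ is hyperbolic; and since then $\Phi=0$ and the $1$-form vanishes, the Weyl connection $\weyl$ can only be the Levi-Civita connection of $g_0$.

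\emph{Passage to a transport equation.} By Proposition~\ref{ppn:mainpdecplx} the hypothesis supplies a Beltrami differential $\mu$ on $(M,J)$ solving~\eqref{eq:nonlinear}, and by Corollary~\ref{cor:georig} this $\mu$ is unique. Since inserting $\mu=0$ into~\eqref{eq:nonlinear} gives $\ov{\Phi}=0$, hence $A\equiv0$, it is enough to show that~\eqref{eq:nonlinear} admits no nonzero solution. I would lift the problem to the unit tangent bundle $SM$ of $g$, on which $A$ determines a thermostat vector field $F=X+\lambda V$, with $X$ the geodesic field, $V$ the vertical field, and $\lambda$ the real function of Fourier degrees $\pm3$ built from $A$; the $F$-flow is (a reparametrisation of) the geodesic flow of $\mathfrak{p}$ and is Anosov. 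Realising the sought Weyl connection as a holomorphic section of the fibre bundle $Z\to M$ of~\cite{MR3144212}, whose fibres are hyperbolic planes and whose fibre coordinate is precisely $\mu$, I expect the Riccati-type equation~\eqref{eq:nonlinear} to be equivalent to a \emph{linear} transport equation $Fw+a\,w=0$ on $SM$ for a positive function $w$ built out of $\mu$ (morally the Poisson kernel of $\mu$ paired with the unit vector), the coefficient $a$ being assembled algebraically from $A$ and the curvature; equivalently one may look for a first integral of $F$ supported in Fourier degrees $-2,\dots,2$.

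\emph{The vanishing theorem.} It remains to show this transport equation forces $w$ to be trivial, whence $\mu\equiv0$ and $A\equiv0$. I would run the commutator computation underlying Pestov's identity, adapted to $F$ in place of $X$, obtaining an $L^2$-identity for $w$ whose leading term is a norm of $Vw$ weighted by an expression assembled from the Gauss curvature $K_g=-1+2|A|^2_g$, from $Xw$, and from $X\lambda$, the last controlled by $\ov{\partial}A=0$. The curvature term by itself is inconclusive, since $K_g$ may be positive where $|A|^2_g\ge 1/2$; but in combination with the thermostat contributions and the Labourie--Loftin normalisation of $g$ the identity should close with a definite sign, yielding $Vw=0$, then $w$ constant, then $\mu\equiv0$. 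I expect this last step to be the principal obstacle: one has to isolate exactly the right extended Pestov identity and verify that the curvature and thermostat contributions conspire to a single sign, the delicate feature being that $F$ couples Fourier modes three degrees apart rather than only adjacent ones as in the geodesic or magnetic setting, so the usual cancellations in the identity must be reworked. By contrast, the algebraic passage from the nonlinear PDE~\eqref{eq:nonlinear} to a linear transport equation, while not quite routine, should follow once the complex geometry of $Z$ from~\cite{MR3144212} is unpacked.
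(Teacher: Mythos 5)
Your reduction and overall strategy track the paper's: encode $\mathfrak{p}$ via Labourie--Loftin as $(g,A,0)$, translate the existence of a compatible Weyl connection into a transport equation on $SM$ for the thermostat $F$, and kill $A$ by a Pestov-type $L^2$ identity, finishing with the uniqueness statement for projectively equivalent Weyl connections (Corollary~\ref{cor:georig}). But the decisive step is missing, and you say so yourself: the vanishing theorem is only conjectured (``the identity should close with a definite sign''), whereas it is precisely the content of Theorem~\ref{thm:noweylV3} and carries essentially all the work. Concretely, three ingredients you would need are absent. First, the transport equation is not of the homogeneous form $Fw+aw=0$ with coefficient built from $A$ and curvature: by Corollary~\ref{cor:transportweyl} it is $Fu=Va+\beta$, where $\beta=\theta-\alpha$ involves the \emph{unknown} $1$-form of the target Weyl structure; the tomography statement must therefore simultaneously show that the degree-$\pm 3$ part $Va$ vanishes and that the degree-$\pm 1$ part $\beta$ is exact, and the interaction of these two pieces is part of the difficulty. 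Second, your worry that ``$K_g$ may be positive where $|A|^2_g\ge 1/2$'' is exactly what the paper dispels by invoking Calabi's theorem: on a closed surface, $K_g=-1+2|A|^2_g$ together with $\ov{\partial}A=0$ forces $K_g\leqslant 0$, hence $|A|^2_g\leqslant 1/2$, which is what verifies the curvature hypothesis $K_g+(2-m)|A|^2_g\leqslant 0$ of the vanishing theorem; without this input your sign argument has no starting point. Third, the identity does not close by a generic reworking of Pestov: the proof of Theorem~\ref{thm:noweylV3} uses the Jane--Paternain identity~\eqref{eq:l2} with the specific choice $c=\theta+V(a)/m$, the holomorphicity of $A$ rewritten as $XVa-mHa-(m-1)(\theta Va-maV\theta)=0$, and a normalisation of $\beta$ to zero divergence, after which completing squares yields directly $a=0$ and $\beta=0$ --- not $Vu=0$ and $u$ constant, as your sketch anticipates; indeed the conclusion about $u$ itself is never needed.

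So the proposal is a correct high-level plan coinciding with the paper's route, but it stops short of the theorem that makes it work; as written, the argument that ``the curvature and thermostat contributions conspire to a single sign'' is a hope, not a proof, and filling it requires the specific choices and the Calabi estimate above.
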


This corollary is an application of the more general vanishing Theorem~\ref{thm:noweylV3} (see below) whose proof makes use of a remarkable $L^2$-energy identity. This energy identity -- known for geodesic flows as {\it Pestov's identity} -- is ubiquitous when solving uniqueness problems for X-ray transforms, including tensor tomography. To make the bridge between \eqref{eq:nonlinear} and this circle of ideas, it is necessary to recast the non-linear PDE in dynamical terms as a transport problem. Given a projective structure $\mathfrak{p}$ captured by the triple $(g,A,\theta)$ we associate a dynamical system on the unit tangent bundle $\pi:SM\to M$ of $g$ as follows. We consider a vector field of the form $F=X+(a-V\theta) V$, where $X,V$ denote the geodesic -- and vertical vector field of $SM$, $a\in C^{\infty}(SM,\mathbb{R})$ represents the cubic differential $A$ (essentially its imaginary part) and where we think of $\theta$ as a function on $SM$. The flow of the vector field $F$ is a {\it thermostat} (cf. Section \ref{section:thermo} below for more details) and it has the property that its orbits project to $M$ as unparametrised geodesics of $\mathfrak{p}$.
We show that \eqref{eq:nonlinear} is equivalent to the transport equation (cf. Corollary \ref{cor:transportweyl})
\begin{equation}
Fu=Va+\beta
\label{eq:transportintrod}
\end{equation}
on $SM$, where the real-valued function $u$ encodes a conformal metric of the sought after complex structure $\hat{J}$ and $\beta$ is a $1$-form on $M$, again thought of as a function on $SM$. Explicitly
$$
u=\frac{3}{2}\log\left(\frac{p}{(pq-r^2)^{2/3}}\right),
$$
where $p,q,r$ are given in terms of a $\hat{J}$-conformal metric $\hat{g}$ and the complex structure $J$ of $(M,g)$ by
$$
p(x,v)=\hat{g}(v,v), \qquad r(x,v)=\hat{g}(v,Jv), \quad\text{and}\quad q(x,v)=\hat{g}(Jv,Jv). 
$$ 
The right hand side in \eqref{eq:transportintrod} has degree 3 in the velocities and the dynamics of $F$ is Anosov when $\mathfrak{p}$ is a properly convex projective structure \cite{arXiv:1706.03554}, hence it is natural to think that techniques from tensor tomography might work.  Regular tensor tomography involves the geodesic vector field $X$ and the typical question at the level of the transport equation is the following: if $Xu=f$ where $f$ has degree $m$ in the velocities, is it true that $u$ has degree $m-1$ in the velocities? The case $m=2$ is perhaps the most important and it is at the core of spectral rigidity of negatively curved manifolds and Anosov surfaces~\cite{MR1632920,MR579579,MR3263517}.  Thermostats introduce new challenges, however we are able to successfully use a general $L^2$ energy identity developed in \cite{MR2486586} (following earlier results for geodesic flows in \cite{MR1863022}) together with ideas in \cite{arXiv:1706.03554}
to show that if equation \eqref{eq:transportintrod} holds then $a=0$ and $\beta$ is exact. Our vanishing Theorem~\ref{thm:noweylV3} is actually rather general and it applies to a class of projective structures considerably larger than properly convex projective structures, see Corollary~\ref{cor:nonweylminlag} below.

For the case of surfaces with boundary a full solution to the tensor tomography problem was given in \cite{MR3069117}; the solution was inspired by the proof of the Kodaira vanishing theorem in Complex Geometry. In the present paper, we go in the opposite direction, we import ideas from geometric inverse problems, to solve an existence question for a non-linear PDE in Complex Geometry. These connections were not anticipated, and it is natural to wonder if they are manifestations of something deeper.

\subsection*{Acknowledgements} The authors are grateful to Nigel Hitchin for helpful conversations. A part of the research for this article was carried out while TM was visiting FIM at ETH Z\"urich. TM thanks FIM for its hospitality. TM was partially funded by the priority programme SPP 2026 ``Geometry at Infinity'' of DFG. GPP was partially supported by EPSRC grant EP/R001898/1.

\setcounter{section}{1}
\section{Preliminaries}

Here we collect some standard facts about Riemann surfaces and the unit tangent bundle that will be needed throughout the paper. 

\subsection{The frame bundle} Throughout the article $M$ will denote a connected oriented smooth surface with empty boundary. Unless stated otherwise, all maps are assumed to be smooth, i.e., $C^{\infty}$. Let  $\pi : P \to M$ denote the~\textit{oriented frame bundle} of $M$ whose fibre at a point $x \in M$ consists of the linear isomorphisms $f : \R^2 \to T_xM$ that are orientation preserving, where we equip $\R^2$ with its standard orientation. The Lie group $\mathrm{GL}^+(2,\R)$ acts transitively from the right on each fibre by the rule $R_h(f)=f\circ h$ and this action turns $\pi : P \to M$ into a principal right $\mathrm{GL}^+(2,\R)$-bundle. The bundle $P$ is equipped with a tautological $\R^2$-valued $1$-form $\omega=(\omega^i)$ defined by $\omega_f=f^{-1}\circ d\pi_f$ and which satisfies the equivariance property $R_h^*\omega=h^{-1}\omega$. The components of $\omega$ are a basis for the $1$-forms on $P$ that are semibasic for the projection $\pi : P \to M$, i.e., those $1$-forms that vanish when evaluated on a vector field that is tangent to the fibres of $\pi : P \to M$. Therefore, if $g$ is a Riemannian metric on $M$, there exist unique real-valued functions $g_{ij}=g_{ji}$ on $P$ so that $\pi^*g=g_{ij}\omega^i\otimes \omega^j$. The Levi-Civita connection ${}^g\nabla$ of $g$ corresponds to the unique connection form $\psi=(\psi^i_j) \in \Omega^1(P,\mathfrak{gl}(2,\R))$ satisfying the structure equations
\begin{align}\label{eq:struceqlevicivita}
\begin{split}
\d\omega^i&=-\psi^i_j\wedge\omega^j,\\
\d g_{ij}&=g_{ik}\psi^k_j+g_{kj}\psi^k_i.
\end{split}
\end{align}
The curvature $\Psi=(\Psi^i_j)$ of $\psi$ is the $2$-form
$$
\Psi^i_j=\d\psi^i_j+\psi^i_k\wedge\psi^k_j=K_gg_{jk}\omega^i\wedge\omega^k,
$$
where $K_g$ denotes (the pullback to $P$ of) the Gauss curvature of $g$.

\subsection{Conformal connections}

The~\textit{conformal frame bundle} of the conformal equivalence class $[g]$ of $g$ is the principal right $\mathrm{CO}(2)$-subbundle $\pi : P_{[g]} \to M$ defined by
$$
P_{[g]}=\left\{f \in P : g_{11}(f)=g_{22}(f)\;\land \;g_{12}(f)=0\right\}.
$$
Here $\mathrm{CO}(2)=\R^+\times \mathrm{SO}(2)$ denotes the linear conformal group whose Lie algebra $\mathfrak{co}(2)$ is spanned by the matrices
$$
\begin{pmatrix} 1 & 0 \\ 0 & 1\end{pmatrix}\quad \text{and}\quad \begin{pmatrix} 0 & -1 \\ 1 & 0\end{pmatrix}. 
$$
A~\textit{conformal connection} for $[g]$ is principal $\mathrm{CO}(2)$ connection 
$$
\kappa=\begin{pmatrix} \kappa_1 & -\kappa_2 \\ \kappa_2 & \kappa_1\end{pmatrix}, \quad \kappa_i \in \Omega^1(P_{[g]})
$$ on $P_{[g]}$ which is~\textit{torsion-free}, that is, satisfies
\begin{equation}\label{eq:deftorsionfreeconfcon}
\d\begin{pmatrix}\omega^1 \\ \omega^2\end{pmatrix}=-\begin{pmatrix} \kappa_1 & -\kappa_2 \\ \kappa_2 & \kappa_1\end{pmatrix}\wedge\begin{pmatrix}\omega^1 \\ \omega^2\end{pmatrix}.
\end{equation}
The standard identification $\R^2\simeq \C$ gives an identification $\mathrm{CO}(2)\simeq \mathrm{GL}(1,\C)$ and consequently, $\mathfrak{co}(2)\simeq \C$. In particular,~\eqref{eq:deftorsionfreeconfcon} takes the form $\d \omega=-\kappa\wedge\omega$ where we think of $\kappa$ and $\omega$ as being complex-valued. Writing $r\ee^{\i\phi}$ for the elements of $\mathrm{CO}(2)$, the equivariance property for $\omega$ implies $(R_{r\\e^{\i\phi}})^*\omega=\frac{1}{r}\ee^{-\i\phi}\omega$. In particular, we see that the $\pi$-semibasic complex-valued $1$-form $\omega$ is well-defined on $M$ up to complex scale. It follows that there exists a unique complex-structure $J$ on $M$ whose $(1, \! 0)$-forms are represented by smooth complex-valued functions $u$ on $P_{[g]}$ satisfying the equivariance property $(R_{r\\e^{\i\phi}})^*u=r\ee^{\i\phi}u$, that is, so that $u\omega$ is invariant under the $\mathrm{CO}(2)$-right action. Of course, this is the standard complex structure on $M$ obtained by rotation of a tangent vector $v$ counter-clockwise by $\pi/2$ with respect to $[g]$. Denoting the canonical bundle of $M$ with respect to $J$ by $\can$, it follows that the sections of $L_{m,\ell}:=\can^{m}\otimes \ov{\can^{\ell}}$ are in one-to-one correspondence with the smooth complex-valued functions $u$ on $P_{[g]}$ satisfying the equivariance property $(R_{r\ee^{\i\phi}})^*u=r^{m+\ell}\ee^{i(m-\ell)\phi}u$. Infinitesimally, this translates to the existence of unique smooth complex-valued functions $u^{\prime}$ and $u^{\prime\prime}$ on $P_{[g]}$ so that
\begin{equation}\label{eq:struceqsecmell}
\d u=u^{\prime}\omega+u^{\prime\prime}\ov{\omega}+mu\kappa+\ell u\ov{\kappa}. 
\end{equation}

Recall, if $\alpha$ is a $1$-form on $M$ taking values in some complex vector bundle over $M$, the decomposition $\alpha=\alpha^{\prime}+\alpha^{\prime\prime}$ of $\alpha$ into its $(1,\! 0)$ part $\alpha^{\prime}$ and $(0, \! 1)$ part $\alpha^{\prime\prime}$ is given by
$$
\alpha^{\prime}=\frac{1}{2}(\alpha-i J\alpha)\quad \text{and}\quad \alpha^{\prime\prime}=\frac{1}{2}(\alpha+i J\alpha)
$$
where we define $(J\alpha)(v):=\alpha(Jv)$ for all tangent vectors $v\in TM$. The principal $\mathrm{CO}(2)$-connection $\kappa$ induces a connection on all (real or complex) vector bundles associated to $P_{[g]}$  and -- by standard abuse of notation -- we use the same letter $\D$ to denote the induced connection on the various bundles. If $s$ is the section of $L_{m,\ell}$ represented by the function $u$ satisfying~\eqref{eq:struceqsecmell}, then $\D^{\prime}s:=(\D s)^{\prime}$ is represented by $u^{\prime}$ and $\D^{\prime\prime}s:=(\D s)^{\prime\prime}$ is represented by $u^{\prime\prime}$. 

Since $\d g_{11}=\d g_{22}$ and $\d g_{12}=0$ on $P_{[g]}$, it follows from~\eqref{eq:struceqlevicivita} that the pullback of the Levi-Civita connection $\psi$ of $g$ to $P_{[g]}$ is a conformal connection. The difference of any two principal $\mathrm{CO}(2)$-connections is $\pi$-semibasic. Therefore, any other torsion-free principal $\mathrm{CO}(2)$-connection $\kappa$ on $P_{[g]}$ is of the form $\kappa=\psi-2\theta_{1}\omega$ for a unique complex-valued function $\theta_1$ on $P_{[g]}$. Since $\kappa$ is a connection, it  satisfies the equivariance property $(R_{r\ee^{\i\phi}})^*\kappa=\frac{1}{r}\ee^{-\i\phi}\kappa r\ee^{\i\phi}=\kappa$ and so does $\psi$. Therefore, $2\theta_1\omega$ is invariant under the $\mathrm{CO}(2)$-right action as well and hence twice the pullback of a $(1,\! 0)$-form on $M$ which we denote by $\theta^{\prime}$. From~\eqref{eq:struceqsecmell} we see that we may think of $\kappa$ as being the connection form of the induced connection on the anti-canonical bundle $\acan$. In particular, $\psi$ may be thought of as being the connection form of the Chern connection induced by $g$ on $\acan$. By the definition of the Chern connection, it induces the complex structure of $\acan$. Since $\psi$ and $\kappa$ differ by a $(1,\! 0)$-form, $\kappa$ also induces the complex structure of $\acan$. Consequently, the conformal connections on $P_{[g]}$ are in one-to-one correspondence with the connections $\D$ on $\acan$ inducing the complex structure, that is, $\D^{\prime\prime}=\ov{\partial}_{\acan}$. 

\subsection{The unit tangent bundle}

For what follows it will be necessary to further reduce $P_{[g]}$. The unit tangent bundle
$$
SM=\left\{(x,v) \in TM : g(v,v)=1\right\}
$$
of $g$ may be interpreted as the principal right $\mathrm{SO}(2)$-subbundle of $P$ defined by
$$
SM=\left\{f \in P : g_{ij}(f)=\delta_{ij}\right\}
$$ 
On $SM$ the identities $\d g_{ij}\equiv 0$ imply the identities $\psi^1_1\equiv\psi^2_2\equiv 0$ and $\psi^1_2\equiv -\psi^2_1$, so that $\psi$ is purely imaginary.

Abusing notation by henceforth writing $\psi$ instead of $\psi^2_1$, the structure equations thus take the form
\begin{equation}\label{eq:struceq2driem}
\d\begin{pmatrix}\omega_1\\ \omega_2\end{pmatrix}=-\begin{pmatrix} 0 & -\psi \\ \psi & 0 \end{pmatrix}\wedge\begin{pmatrix}\omega_1\\ \omega_2\end{pmatrix} \quad \text{and} \quad \d\psi=-K_g\,\omega_1\wedge\omega_2,
\end{equation}
where we write $\omega_i=\delta_{ij}\omega^j$. Note that on $SM$ the $1$-forms $\omega_1,\omega_2$ take the explicit form
\begin{equation}\label{eq:omegaexplicit}
\omega_1(\xi)=g(v,d\pi(\xi))\quad\text{and}\quad \omega_2(\xi)=g(Jv,d\pi(\xi)), \quad \xi \in T_{(x,v)}SM.
\end{equation}
Furthermore, the $1$-form $\psi$ becomes
\begin{equation}\label{eq:defpsi}
\psi(\xi)=g\left(\gamma^{\prime\prime}(0),Jv\right)
\end{equation}
where $\xi \in T_{(x,v)}SM$ and $\gamma : (-\e,\e) \to SM$ is any curve with $\gamma(0)=(x,v)$, $\dot{\gamma}(0)=\xi$ and $\gamma^{\prime\prime}$ denotes the covariant derivative of $\gamma$ along $\pi\circ \gamma$.

The three $1$-forms $(\omega_1,\omega_2,\psi)$ trivialise the cotangent bundle of $SM$ and we denote by $(X,H,V)$ the corresponding dual vector fields. The vector field $X$ is the geodesic vector field of $g$, $V$ is the infinitesimal generator of the $\mathrm{SO}(2)$-action and $H$ is the horizontal vector field satisfying $H=[V,X]$. The structure equations~\eqref{eq:struceq2driem} imply the additional commutation relations
$$
[V,H]=-X\quad \text{and}\quad [X,H]=K_gV. 
$$

Following~\cite{MR579579}, we use the volume form $\Theta=\omega_1\wedge\omega_2\wedge\psi$ on $SM$ to define an inner product
$$
\langle u,v\rangle=\int_{SM}u \bar v\,\Theta
$$
for complex-valued functions $u,v$ on $SM$ and we denote by $L^2(SM)$ the corresponding space of square integrable complex-valued functions on $SM$. The structure equations~\eqref{eq:struceq2driem} and Cartan's formula imply that all vector fields $X,H,V$ preserve $\Theta$. In particular, $-i V$ is densely defined and self-adjoint with respect to $\langle \cdot,\cdot\rangle$. Consequently, we have an orthogonal direct sum decomposition into the kernels $\mathcal{H}_m$ of the operators $m\mathrm{Id}+i V$
\begin{equation}\label{eq:vertfourier}
L^2(SM)=\bigoplus_{m \in \Z}\mathcal{H}_{m}.
\end{equation}

\subsection{Weyl connections}

If $\theta$ is a $1$-form on $M$, we may write $\pi^*\theta=\theta\omega_1+V(\theta)\omega_2$, where on the right hand side we think of $\theta$ as being a real-valued function on $SM$. Therefore, $\pi^*\theta^{\prime}=\theta_1\omega$, where $\theta_1=\frac{1}{2}(\theta-\i V\theta)$ and likewise $\pi^*\theta^{\prime\prime}=\theta_{-1}\ov{\omega}$, where $\theta_{-1}=\frac{1}{2}(\theta+\i V\theta)$. On $SM$ the connection form $\kappa$ of a conformal connection thus becomes $\kappa=\i \psi-2\theta_1\omega$ or in matrix notation
\begin{equation}\label{eq:defconfconmatrix}
\kappa=\begin{pmatrix} 0 & -\psi \\ \psi & 0\end{pmatrix}+\begin{pmatrix} -\theta\omega_1-V(\theta)\omega_2 & -V(\theta)\omega_1+\theta\omega_2 \\ V(\theta)\omega_1-\theta\omega_2 & -\theta\omega_1-V(\theta)\omega_2\end{pmatrix}.
\end{equation}
Finally, without the identification $\R^2\simeq \C$, we may equivalently think of the connection form $\kappa$ as the connection form of a torsion-free connection on $TM$. Writing $\kappa$ as 
$$
\kappa=\begin{pmatrix} 0 & -\psi \\ \psi & 0 \end{pmatrix}+\begin{pmatrix} \theta\omega_1 & \theta\omega_2 \\ V(\theta)\omega_1 & V(\theta)\omega_2\end{pmatrix}-\begin{pmatrix} 2\theta\omega_1+V(\theta)\omega_2 & V(\theta)\omega_1 \\ \theta\omega_2 & \theta\omega_1+2V(\theta)\omega_2 \end{pmatrix},
$$
the reader may easily check that $\kappa$ is the connection form of 
\begin{equation}\label{eq:exprconfcon}
\weyl={}^g\nabla+g\otimes \theta^{\sharp}-\mathrm{Sym}(\theta), 
\end{equation}
where the section $\mathrm{Sym}(\theta)$ of $S^2(T^*M)\otimes TM$ is defined by the rule 
$$
\mathrm{Sym}(\theta)(v_1,v_2)=\theta(v_1)v_2+\theta(v_2)v_1
$$ 
for all tangent vectors $v_1,v_2 \in TM$. Connections of the form~\eqref{eq:exprconfcon} for $g \in [g]$ and $\theta \in \Omega^1(M)$ are known as~\textit{Weyl connections} for the conformal structure $[g]$. By construction, they preserve $[g]$, that is, the parallel transport maps are angle preserving with respect to $[g]$. Conversely, every torsion-free connection on $TM$ preserving $[g]$ is of the form~\eqref{eq:exprconfcon} for some $g \in [g]$ and $1$-form $\theta$. Summarising, we have the following folklore result:
\begin{Proposition}
On a Riemann surface $M$ with conformal structure $[g]$ the following sets are in one-to-one correspondence:
\begin{itemize}
\item[(i)] the conformal connections on $P_{[g]}$;
\item[(ii)] the connections on $\acan$ inducing the complex structure;
\item[(iii)] the Weyl connections for $[g]$. 
\end{itemize}
\end{Proposition}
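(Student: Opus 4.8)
Most of the content is already implicit in the discussion above, and the plan is to organise it into three bijections by using that, after fixing a background metric $g\in[g]$, each of the three sets carries a compatible free transitive action of $\Omega^1(M)$. The common basepoints are supplied by $g$: the pullback of the Levi-Civita connection ${}^g\nabla$ to $P_{[g]}$ is a torsion-free $\mathrm{CO}(2)$-connection (since $\d g_{11}=\d g_{22}$ and $\d g_{12}=0$ on $P_{[g]}$ reduce~\eqref{eq:struceqlevicivita} to the conformal structure equations); it is the connection form of the Chern connection on $\acan$, hence induces the complex structure; and as a connection on $TM$ it is~\eqref{eq:exprconfcon} with $\theta=0$, hence a Weyl connection for $[g]$.

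For (i)$\leftrightarrow$(ii): the difference of a principal $\mathrm{CO}(2)$-connection $\kappa$ and $\psi$ is a $\pi$-semibasic $\mathfrak{co}(2)$-valued $1$-form, which under $\mathfrak{co}(2)\simeq\C$ we write as $a\,\omega+b\,\ov\omega$. The torsion-free condition $\d\omega=-\kappa\wedge\omega$, combined with $\d\omega=-\psi\wedge\omega$ and $\omega\wedge\omega=0$, forces $b\,\ov\omega\wedge\omega=0$, hence $b=0$; so the torsion-free $\mathrm{CO}(2)$-connections are exactly those differing from $\psi$ by a $(1,\!0)$-form, which we write as $-2\theta_1\omega$ with $\pi^*\theta^{\prime}=\theta_1\omega$. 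Regarding $\kappa$, via~\eqref{eq:struceqsecmell}, as the connection form of the induced connection $\D$ on $\acan$, and recalling that $\psi$ induces the Chern connection (so that $\psi^{\prime\prime}=\ov{\partial}_{\acan}$), the same type decomposition shows that $\D^{\prime\prime}=\ov{\partial}_{\acan}$ precisely when $\D-\psi$ has no $(0,\!1)$-part, that is, precisely when $\kappa$ is torsion-free; on parameters this correspondence is the identity $\theta^{\prime}\mapsto\theta^{\prime}$.

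For (i)$\leftrightarrow$(iii): a torsion-free $\mathrm{CO}(2)$-connection on $P_{[g]}$ extends uniquely to a torsion-free principal connection on $P$, whose parallel transport preserves $[g]$ since the structure group has been reduced to $\mathrm{CO}(2)$; this is the connection form of a Weyl connection for $[g]$. Conversely, a Weyl connection preserves $[g]$, so it reduces to $P_{[g]}$ and is torsion-free, hence a conformal connection. The computation behind~\eqref{eq:exprconfcon} identifies the Weyl connection attached to $\kappa=\psi-2\theta_1\omega$ with $\weyl={}^g\nabla+g\otimes\theta^{\sharp}-\mathrm{Sym}(\theta)$ where $\pi^*\theta^{\prime}=\theta_1\omega$, so this correspondence is again the identity on $\Omega^1(M)$; note that $\theta\mapsto\weyl$ is injective because $g\otimes\eta^{\sharp}-\mathrm{Sym}(\eta)$ vanishes only for $\eta=0$ (evaluate on $\eta^{\sharp}$). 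Composing the two bijections yields the assertion.

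I expect the only delicate point to be matching the various incarnations of ``torsion-free'' across the three pictures: checking that the extension of a $\mathrm{CO}(2)$-connection on $P_{[g]}$ to a connection on $P$ has connection form~\eqref{eq:exprconfcon} and is torsion-free in the sense of~\eqref{eq:struceqlevicivita}, and dually that the connection it induces on $\acan$ is the one governed by~\eqref{eq:struceqsecmell}. The remaining steps are bookkeeping with the structure equations and the equivariance of $\omega$ and $\kappa$ recorded above; in particular, since all three parametrisations by $\Omega^1(M)$ transform in the same way under a conformal rescaling $g\mapsto\ee^{2u}g$, $\theta\mapsto\theta+\d u$ (using ${}^{\ee^{2u}g}\nabla-{}^g\nabla=\mathrm{Sym}(\d u)-g\otimes(\d u)^{\sharp}$), the resulting bijections are independent of the auxiliary choice of $g$.
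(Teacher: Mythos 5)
Your proposal is correct and takes essentially the same route as the paper, whose proof of this Proposition is the preceding discussion: after fixing $g\in[g]$, all three sets are identified with $\Omega^1(M)$ via the decomposition $\kappa=\psi-2\theta_1\omega$ (torsion-freeness killing the $(0,\!1)$-part), the Chern connection on $\acan$ as basepoint, and the explicit identification of $\kappa$ with the connection form of~\eqref{eq:exprconfcon}. Your reorganisation as compatible free transitive $\Omega^1(M)$-actions, plus the explicit check of independence of the auxiliary metric, is just a tidier packaging of the same argument.
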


\section{Projective thermostats}
\label{section:thermo}

In this section we show how to associate the triple $(g,A,\theta)$ to a given projective structure $\mathfrak{p}$. As mentioned in the introduction, the existence of such a triple is a consequence of some elementary facts about $\mathrm{SO}(2)$-representation theory and a description of projective structures as sections of a certain affine bundle over $M$ (see~\cite{arXiv:1510.01043} for a construction of $(g,A,\theta)$ in that spirit), here instead we obtain the triple as a by-product of a characterisation of~\textit{projective thermostats}.  

A (generalised)~\textit{thermostat} is a flow $\phi$ on $SM$ which is generated by a vector field of the form $F=X+\lambda V$, where $\lambda$ is a smooth real-valued function on $SM$. In this article we are mainly interested in the case where the generalised thermostat is~\textit{projective}. By this we mean that there exists a torsion-free connection $\nabla$ on $TM$ having the property that for every $\phi$-orbit $\gamma : I \to SM$, there exists a reparametrisation $\varphi : I^{\prime} \to I$ so that $\pi\circ\gamma\circ\varphi : I^{\prime} \to M$ is a geodesic of $\nabla$. 

Phrased more loosely, the orbit projections to $M$ agree with the geodesics of a projective structure $\mathfrak{p}$ on $M$. By a classical result of Cartan, Eisenhart, Weyl (see for instance~\cite[Chap.~6, Addendum 1, Prop.~17]{spivakvol2} for a modern reference), two torsion-free connections $\nabla$ and $\nabla^{\prime}$ on $TM$ are projectively equivalent if and only if there exists a $1$-form $\alpha$ on $M$ so that
$$
\nabla^{\prime}-\nabla=\mathrm{Sym}(\alpha).
$$

\subsection{A characterisation of projective thermostats}

It turns out that projective thermostats admit a simple characterisation in terms of the vertical Fourier decomposition~\eqref{eq:vertfourier} of $\lambda$. Towards this end we first show:

\begin{Lemma}\label{ppn:charprojfol}
Let $\nabla$ be a torsion-free connection on the tangent bundle $TM$ and $\varphi=(\varphi^i_j) \in \Omega^1(SM,\mathfrak{gl}(2,\R))$ its connection form. Then, up to reparametrisation, the leaves of the foliation $\mathcal{F}$ defined by $\varphi^2_1=\omega_2=0$ project to $M$ to become the geodesics of $\nabla$. Conversely, every geodesic of $\nabla$, parametrised with respect to $g$-arc length, lifts to become a leaf of $\mathcal{F}$.  
\end{Lemma}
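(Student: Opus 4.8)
The statement is really a local computation on $SM$ (viewed as the $\mathrm{SO}(2)$-reduction of the frame bundle $P$), so I would work entirely with the coframings $(\omega_1,\omega_2,\psi)$ on $SM$ and the pulled-back connection form $\varphi=(\varphi^i_j)$ of $\nabla$ on $P$, restricted to $SM$. First I would recall that since $\nabla$ is torsion-free, $\varphi$ satisfies $\mathrm{d}\omega^i=-\varphi^i_j\wedge\omega^j$ on $P$, and that the difference $\varphi-\psi$ (the Levi-Civita connection form) is $\pi$-semibasic, so each $\varphi^i_j-\psi^i_j$ is a combination of $\omega_1,\omega_2$ with function coefficients on $SM$. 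The distribution cut out by $\{\varphi^2_1=0,\ \omega_2=0\}$ is $1$-dimensional (three independent $1$-forms on a $3$-manifold would be needed to pin down a point; here we impose two, leaving a line field), hence integrable for dimensional reasons, defining the foliation $\mathcal F$.

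**Key steps.** (1) Identify the leaf directions: a vector $\xi\in T_{(x,v)}SM$ lies in $\mathcal F$ iff $\omega_2(\xi)=0$ and $\varphi^2_1(\xi)=0$. From \eqref{eq:omegaexplicit}, $\omega_2(\xi)=0$ says $\mathrm d\pi(\xi)$ is proportional to $v$, so the projected curve is tangent to its own velocity direction — i.e. it is an unparametrised curve through $x$ in direction $v$. (2) Show $\varphi^2_1(\xi)=0$ forces that curve to be a geodesic of $\nabla$: if $\gamma(t)=(c(t),\dot c(t)/|\dot c(t)|)$ is a leaf, write $c$ in a local frame, and the vanishing of $\varphi^2_1$ along $\gamma$ is exactly the statement that the component of $\nabla_{\dot c}\dot c$ normal to $\dot c$ vanishes (the $\varphi^1_1$-component being the tangential/reparametrisation part), which is the equation for an unparametrised $\nabla$-geodesic. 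This is the place where one uses that $\varphi^2_1$ is, modulo semibasic terms, the "off-diagonal" connection form measuring geodesic curvature with respect to $\nabla$. (3) Conversely, given a $\nabla$-geodesic $c$, reparametrise by $g$-arc length to get $\tilde c$ with $|\dot{\tilde c}|_g\equiv 1$, lift to $\gamma(t)=(\tilde c(t),\dot{\tilde c}(t))\in SM$, and check $\omega_2(\dot\gamma)=0$ (immediate: $\mathrm d\pi(\dot\gamma)=\dot{\tilde c}\parallel v$) and $\varphi^2_1(\dot\gamma)=0$ (the normal component of $\nabla_{\dot{\tilde c}}\dot{\tilde c}$ vanishes because $c$ is a $\nabla$-geodesic up to reparametrisation, and reparametrisation only changes the tangential component). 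Hence $\gamma$ is a leaf of $\mathcal F$.

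**Main obstacle.** The only genuinely delicate point is making precise — in the moving-frame formalism on $SM$ rather than on $P$ — that "$\varphi^2_1$ vanishes on a curve $\iff$ the curve is an unparametrised $\nabla$-geodesic," keeping careful track of the semibasic correction terms $\varphi^i_j-\psi^i_j$ and of how a reparametrisation acts (it shifts $\varphi^1_1$ but leaves $\varphi^2_1$ unaffected along the direction field). Equivalently one can argue projective-invariantly: by the Cartan–Eisenhart–Weyl criterion stated above, $\varphi^2_1$ modulo $\omega_1,\omega_2$ and modulo the $\mathrm{Sym}(\alpha)$-ambiguity is a well-defined object depending only on the projective class, and for the Levi-Civita connection it reduces to $\psi$, whose vanishing-plus-$\omega_2=0$ characterises $g$-geodesics; adding $\mathrm{Sym}(\alpha)$ does not change the leaves. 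I would present whichever of these two routes is shorter, most likely the direct frame computation since the structure equations \eqref{eq:struceq2driem} are already set up. The remaining verifications (integrability, the converse direction) are routine once the correspondence in step (2) is established.
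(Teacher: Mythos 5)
Your proposal follows essentially the same route as the paper's proof: write $\nabla={}^g\nabla+\tilde{B}$ so that on $SM$ the form $\varphi$ differs from the $\mathfrak{so}(2)$-form $\psi$ by $\pi$-semibasic terms, use $\omega_2=0$ to identify leaves with $g$-unit-speed tangent lifts, read off from $\varphi^2_1=0$ that $g\left(\nabla_{\dot{\gamma}}\dot{\gamma},J\dot{\gamma}\right)=0$, i.e.\ $\nabla_{\dot{\gamma}}\dot{\gamma}=f\dot{\gamma}$, and conclude with the standard reparametrisation criterion for unparametrised geodesics (Spivak), with the converse obtained by running the same computation on the arc-length lift. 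This matches the paper's argument, so no changes are needed.
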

\begin{proof}
Recall that the set of torsion-free connections on $TM$ is an affine space modelled on the  sections of $S^2(T^*M)\otimes TM$. It follows that there exists a $1$-form $\tilde{B}$ on $M$ with values in the endomorphisms of $TM$ so that $\nabla={}^g\nabla+\tilde{B}$. As we have seen, the connection form of the Levi-Civita connection of $g$ on $TM$ is 
$$
\kappa=\begin{pmatrix} 0 & -\psi \\ \psi & 0 \end{pmatrix}.
$$
Hence there exist unique real-valued function $b^i_{jk}=b^i_{kj}$ on $SM$ so that
$$
\varphi=\begin{pmatrix} 0 & -\psi \\ \psi & 0 \end{pmatrix}+\begin{pmatrix} b^1_{11}\omega_1+b^1_{12}\omega_2 & b^1_{21}\omega_1+b^1_{22}\omega_2 \\ b^2_{11}\omega_1+b^2_{12}\omega_2 & b^2_{21}\omega_1+b^2_{22}\omega_2\end{pmatrix}.
$$
Explicitly, $b^i_{jk}(v)=g(\tilde{B}(e_j)e_k,e_i)$, where we write $e_1=v$ and $e_2=J v$ for $v \in SM$. 

Let $\delta : I \to SM$ be a leaf of $\mathcal{F}$, so that $\delta^*\omega_2=0$. Writing $\gamma:=\pi \circ \delta$ and evaluating $\delta^*\omega_2$ on the standard vector field $\partial_t$ of $\R$, we obtain
$$
0=\partial_t \inc \delta^*\omega_2=g\left(\d(\pi \circ \delta)(\partial_t),J\delta(t)\right)=g(\dot{\gamma}(t),J\delta(t)),
$$
so that $\delta=f\dot{\gamma}$ for some unique $f \in C^{\infty}(I)$. Hence without loosing generality, we may assume that the leaves of $\mathcal{F}$ are of the form $\dot{\gamma}$ for some smooth curve $\gamma : I \to M$  having unit length velocity vector with respect to $g$.

By construction of $\psi$, see~\eqref{eq:defpsi}, the pullback $1$-form $\dot{\gamma}^*\psi$ evaluated on $\partial_t$ gives the function $g({}^g\nabla_{\dot{\gamma}}\dot{\gamma},J\dot{\gamma})$, hence $\dot{\gamma}^*\varphi^2_1=0$ if and only if
$$
0=g\left({}^g\nabla_{\dot{\gamma}}\dot{\gamma},J\dot{\gamma}\right)+b^2_{11}(\dot{\gamma})=g\left({}^g\nabla_{\dot{\gamma}}\dot{\gamma}+\tilde{B}(\dot{\gamma})\dot{\gamma},J\dot{\gamma}\right).
$$
It follows that there exists a function $f \in C^{\infty}(I)$ so that
$$
{}^g\nabla_{\dot{\gamma}}\dot{\gamma}+\tilde{B}(\dot{\gamma})\dot{\gamma}=\nabla_{\dot{\gamma}}\dot{\gamma}=f\dot{\gamma}. 
$$
By a standard lemma in projective differential geometry~\cite[Chap.~6, Addendum 1, Prop.~17]{spivakvol2} a smooth immersed curve $\gamma : I \to M$ can be reparametrised to become a geodesic of the torsion-free connection $\nabla$ on $TM$ if and only if there exists a smooth function $f : I \to \R$ so that $\nabla_{\dot{\gamma}}\dot{\gamma}=f\dot{\gamma}$. The claim follows by applying this lemma. 
\end{proof}
\begin{Lemma}
Suppose the thermostat $F=X+\lambda V$ is projective, then 
$$
0=\frac{3}{2}\lambda+\frac{5}{3}VV\lambda+\frac{1}{6}VVVV\lambda. 
$$
\end{Lemma}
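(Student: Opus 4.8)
The plan is to use the characterisation from Lemma~\ref{ppn:charprojfol}: the thermostat $F = X + \lambda V$ is projective precisely when the foliation by $F$-orbits, after projection to $M$, coincides (up to reparametrisation) with the geodesics of some torsion-free connection $\nabla$, which in turn means the orbit foliation agrees with the leaves of $\mathcal{F}$ defined by $\varphi^2_1 = \omega_2 = 0$ for the connection form $\varphi$ of $\nabla$. So first I would write $\nabla = {}^g\nabla + \tilde B$ as in the lemma, record the functions $b^i_{jk}$ on $SM$, and then express the condition that the orbit of $F$ through $(x,v)$ projects to a curve $\gamma$ which is a geodesic of $\nabla$ up to reparametrisation. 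Since along an $F$-orbit the velocity $\dot\gamma$ rotates (the $V$-component is $\lambda$, not zero), the lifted orbit is \emph{not} a leaf of $\mathcal{F}$ directly; instead one has to rotate the frame. The natural move is to pass to the frame bundle picture: an $F$-orbit is $t \mapsto (x(t), v(t))$ with $\dot x = v$ (unit speed) and ${}^g\nabla_{\dot\gamma}\dot\gamma = \lambda(x,v)\, Jv$. Being an unparametrised geodesic of $\nabla = {}^g\nabla + \tilde B$ means ${}^g\nabla_{\dot\gamma}\dot\gamma + \tilde B(\dot\gamma)\dot\gamma \parallel \dot\gamma$, i.e. the $Jv$-component vanishes: $\lambda(x,v) + b^2_{11}(x,v) = 0$, hence $\lambda = -b^2_{11}$. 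But $b^2_{11}(x,v) = g(\tilde B(v)v, Jv)$ is a \emph{cubic} in $v$ (it is a section of $S^3(T^*M)\otimes \Lambda^0$-type, degree-3 homogeneous extended to $SM$), so $\lambda$ must be the restriction to $SM$ of such a cubic polynomial in the velocities, possibly plus lower-order contributions — and here is where one must be careful about exactly which harmonic degrees can appear.

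The cleanest way to finish is therefore a Fourier-degree computation. On $SM$, functions of the form $v \mapsto g(\tilde B(v)v, Jv)$ with $\tilde B$ a fixed tensor span exactly those $\lambda$ lying in $\mathcal{H}_{-3} \oplus \mathcal{H}_{-1} \oplus \mathcal{H}_{1} \oplus \mathcal{H}_{3}$ — that is, $\lambda$ has no Fourier modes of even degree and no modes of degree $|m| \ge 5$. Equivalently, $\lambda$ is annihilated by the operator that kills precisely $\mathcal{H}_{\pm1}$ and $\mathcal{H}_{\pm3}$. Since $iV$ acts on $\mathcal{H}_m$ as multiplication by $m$, the operator $V^2$ acts as $-m^2$ and $V^4$ as $m^4$. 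The polynomial $p(m) = m(m^2 - 1)(m^2 - 9)$ vanishes exactly on $m \in \{0, \pm 1, \pm 3\}$; we want to kill only the nonzero relevant modes, but since $\lambda$ is real and we are told it is projective, the claim is that $\lambda$ has \emph{only} modes in $\{\pm 1, \pm 3\}$ (no constant term: a geodesic reparametrisation of a connection cannot produce an $m=0$ piece because that would be an honest magnetic-type term not absorbable into $\mathrm{Sym}(\alpha)$ — this needs a short argument, or it falls out of the fact that $b^2_{11}$ restricted from a cubic tensor has no $\mathcal{H}_0$ component). Granting that, $\lambda \in \mathcal{H}_{-3}\oplus\mathcal{H}_{-1}\oplus\mathcal{H}_1\oplus\mathcal{H}_3$, so $(V^2 + 1)(V^2 + 9)\lambda = 0$, i.e. $V^4\lambda + 10 V^2 \lambda + 9\lambda = 0$. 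Dividing by $6$ gives $\tfrac{1}{6}VVVV\lambda + \tfrac{5}{3}VV\lambda + \tfrac{3}{2}\lambda = 0$, which is exactly the stated identity.

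So concretely the steps are: (1) invoke Lemma~\ref{ppn:charprojfol} and write $\nabla = {}^g\nabla + \tilde B$; (2) along an $F$-orbit compute ${}^g\nabla_{\dot\gamma}\dot\gamma = \lambda Jv$ and impose that $\nabla_{\dot\gamma}\dot\gamma \parallel \dot\gamma$, obtaining $\lambda = -b^2_{11}$ on $SM$; (3) identify $b^2_{11}$ as the $SM$-restriction of $v \mapsto g(\tilde B(v)v, Jv)$ and decompose this by $\mathrm{SO}(2)$-representation theory — a rank-$1$ tensor $\tilde B$ contributes homogeneous-degree-$3$ trigonometric polynomials in the fibre angle, whose only Fourier modes are $\pm1$ and $\pm3$ (the degree-$3$ monomials $\cos^3,\cos^2\sin,\dots$ expand into $e^{\pm i\phi}, e^{\pm 3i\phi}$ only); (4) conclude $(V^2+1)(V^2+9)\lambda = 0$ and expand. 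The main obstacle is step~(3): making the representation-theoretic claim airtight, i.e. verifying that the projective-equivalence freedom $\mathrm{Sym}(\alpha)$ does not reintroduce an $\mathcal{H}_0$ (or $\mathcal{H}_{\pm2}$) component and that one genuinely gets \emph{only} odd modes of degree $\le 3$ — this is where the specific structure of $\mathrm{Sym}$ and the trace-free/cubic-differential decomposition mentioned in the introduction does the work, and it should be spelled out by writing $\tilde B$ in the frame, computing $b^2_{11}(\dot\gamma)$ explicitly as a function of the angle, and reading off the harmonics; everything else is bookkeeping with the structure equations~\eqref{eq:struceq2driem} and the commutation relations for $X, H, V$.
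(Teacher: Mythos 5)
Your argument is correct, and it reaches the identity by a route that differs from the paper's in its second half. The first step coincides: the paper also contracts the connection form with $F$ (equivalently, computes the geodesic curvature of the projected orbits) to get $\lambda=-b^2_{11}$ with $b^2_{11}(v)=g(\tilde B(v)v,Jv)$. From there the paper does not invoke Fourier modes at all: it uses the infinitesimal equivariance of the coefficients $b^i_{jk}$ (the structure equation $\d b^i_{jk}=b^i_{jl}\kappa^l_k+b^i_{lk}\kappa^l_j-b^l_{jk}\kappa^i_l$ mod $\omega_i$) to compute $Vb^2_{11}=2b^2_{12}-b^1_{11}$, $VVb^2_{11}=2b^2_{22}-3b^2_{11}-4b^1_{12}$ and $VVVVb^2_{11}=40b^1_{12}+21b^2_{11}-20b^2_{22}$, and then checks the stated linear combination vanishes identically; the equivalence with membership in $\mathcal{H}_{-3}\oplus\mathcal{H}_{-1}\oplus\mathcal{H}_1\oplus\mathcal{H}_3$ is deferred to the following lemma. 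You instead prove that membership directly, by observing that $b^2_{11}$ is the restriction to $SM$ of a homogeneous cubic in the velocity, whose fibrewise expansion contains only the harmonics $e^{\pm\i\phi},e^{\pm3\i\phi}$, and then apply $(V^2+1)(V^2+9)=6\bigl(\tfrac{3}{2}+\tfrac{5}{3}V^2+\tfrac{1}{6}V^4\bigr)$ on $\mathcal{H}_{\pm1}\oplus\mathcal{H}_{\pm3}$ — in effect you establish the conclusion of the paper's next lemma (in the direction you need) as part of this proof. The step you flag as the main obstacle is in fact harmless: the odd-degree homogeneity alone excludes $\mathcal{H}_0$ and $\mathcal{H}_{\pm2}$ by parity, only the inclusion (not the ``span exactly'') is needed, and the $\mathrm{Sym}(\alpha)$ freedom is irrelevant here since $g(\mathrm{Sym}(\alpha)(v)v,Jv)=2\alpha(v)\,g(v,Jv)=0$, so $b^2_{11}$ is unchanged within the projective class. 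Your version is shorter and more conceptual; the paper's explicit frame-bundle computation avoids the fibrewise-polynomial argument and produces formulas for the iterated $V$-derivatives that mesh with the surrounding bookkeeping.
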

\begin{proof}
Using notation as in the proof of Lemma~\ref{ppn:charprojfol}, we must have $F\inc \varphi^2_1=0$ and $F\inc\omega_2=0$. The latter conditions is trivially satisfied, but the former gives
$$
F\inc \varphi^2_1=\left(X+\lambda V\right)\inc\left(\psi+b^2_{11}\omega_1+b^2_{12}\omega_2\right)=\lambda+b^2_{11}=0, 
$$
so that $\lambda=-b^2_{11}$. Since the functions $b^i_{jk}$ represent a section of $S^2(T^*M)\otimes TM$, they satisfy the structure equations
$$
\d b^i_{jk}=b^i_{jl}\kappa^l_k+b^i_{lk}\kappa^l_j-b^l_{jk}\kappa^i_l, \quad \text{mod}\quad \omega_i. 
$$
In particular, from this we compute
$$
Vb^2_{11}=V\inc \d b^2_{11}=V\inc \left(2b^2_{12}-b^1_{11}\right)\psi=2b^2_{12}-b^1_{11}. 
$$
Applying $V$ again we obtain
$$
VVb^2_{11}=2b^2_{22}-3b^2_{11}-4b^1_{12}
$$
and likewise
$$
VVVVb^2_{11}=40b^1_{12}+21b^2_{11}-20b^2_{22},
$$
so that the claim follows from an elementary calculation. 
\end{proof}
\begin{Lemma}
For $\lambda \in C^{\infty}(SM)$ the following statements are equivalent:
\begin{itemize}
\item[(i)] $0=\frac{3}{2}\lambda+\frac{5}{3}VV\lambda+\frac{1}{6}VVVV\lambda$;
\item[(ii)] $\lambda \in \mathcal{H}_{-1}\oplus \mathcal{H}_{1}\oplus \mathcal{H}_{-3}\oplus \mathcal{H}_{3}$. 
\end{itemize}
\end{Lemma}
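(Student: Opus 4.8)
The plan is to diagonalise the fourth-order operator in (i) against the vertical Fourier decomposition~\eqref{eq:vertfourier} and simply read off which modes survive. Set
$$
P:=\frac{3}{2}\,\mathrm{Id}+\frac{5}{3}\,VV+\frac{1}{6}\,VVVV ,
$$
so that (i) is the equation $P\lambda=0$. Since $\mathcal{H}_m=\ker(m\,\mathrm{Id}+iV)$, the vector field $V$ acts on $\mathcal{H}_m$ as multiplication by $im$, hence $P$ acts on $\mathcal{H}_m$ as multiplication by the real constant
$$
c(m):=\frac{3}{2}-\frac{5}{3}m^2+\frac{1}{6}m^4=\frac{1}{6}\bigl(m^4-10m^2+9\bigr)=\frac{1}{6}(m^2-1)(m^2-9),
$$
which vanishes precisely for $m\in\{-3,-1,1,3\}$. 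This identity already contains the whole content of the lemma, and the rest is bookkeeping.

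For (ii)$\Rightarrow$(i) I would just write $\lambda=\lambda_{-3}+\lambda_{-1}+\lambda_{1}+\lambda_{3}$ with $\lambda_m\in\mathcal{H}_m$ and apply $P$ termwise: $P\lambda=\sum c(m)\lambda_m=0$ because $c(\pm1)=c(\pm3)=0$. For the converse, I would decompose a general $\lambda\in C^{\infty}(SM)$ as $\lambda=\sum_{m\in\Z}\lambda_m$; because $P$ is a polynomial in $V$ and $V$ preserves each $\mathcal{H}_m$, one gets $P\lambda=\sum_{m\in\Z}c(m)\lambda_m$ with the $m$-th summand lying in $\mathcal{H}_m$. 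The spaces $\mathcal{H}_m$ being mutually orthogonal, $P\lambda=0$ forces $c(m)\lambda_m=0$ for all $m$, i.e.\ $\lambda_m=0$ whenever $m\notin\{-3,-1,1,3\}$, which is (ii).

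I do not expect a real obstacle here: the only point needing a word is that the differential operator $P$ may be applied term by term to the Fourier series of a smooth function, which is clear since~\eqref{eq:vertfourier} is precisely the Fourier decomposition for the circle action generated by $V$. If one prefers to dispense with $L^2$ entirely, the same conclusion follows by noting the factorisation $6P=(VV+\mathrm{Id})(VV+9\,\mathrm{Id})$: restricted to a fibre $S^1$ of $SM\to M$, equation (i) becomes the constant-coefficient ODE $(\partial_t^2+1)(\partial_t^2+9)\lambda=0$, whose $2\pi$-periodic solutions are exactly the linear combinations of $e^{\pm it}$ and $e^{\pm 3it}$ — which is (ii) once more.
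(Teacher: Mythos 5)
Your proposal is correct and follows essentially the same route as the paper: both diagonalise the operator $\frac{3}{2}\mathrm{Id}+\frac{5}{3}VV+\frac{1}{6}VVVV$ against the decomposition~\eqref{eq:vertfourier}, observe that it acts on $\mathcal{H}_m$ by the scalar $\frac{3}{2}-\frac{5}{3}m^2+\frac{1}{6}m^4=\frac{1}{6}(m^2-1)(m^2-9)$, and conclude by orthogonality that only the modes $m\in\{-3,-1,1,3\}$ can survive. The paper phrases the easy direction via the real combinations $\nu_1,\nu_3$ with $VV\nu_1=-\nu_1$, $VV\nu_3=-9\nu_3$, but this is the same computation as your termwise application of $P$.
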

\begin{proof}
Let $\lambda \in \mathcal{H}_{-3}\oplus \mathcal{H}_{-1}\oplus \mathcal{H}_{1}\oplus \mathcal{H}_{3}$ so that we may write $\lambda=\lambda_{-3}+\lambda_{-1}+\lambda_1+\lambda_3$ with $\lambda_m \in \mathcal{H}_m$. Since $\lambda$ is real-valued we have $\lambda_{-1}=\ov{\lambda_1}$ and $\lambda_{-3}=\ov{\lambda_3}$. Hence setting $\nu_1=\lambda_{-1}+\lambda_1$ and $\nu_3=\lambda_{-3}+\lambda_3$, we obtain $VV\nu_1=-\nu_1$ and $VV\nu_3=-9\nu_3$ so that
$$
\frac{3}{2}\lambda+\frac{5}{3}VV\lambda+\frac{1}{6}VVVV\lambda=\frac{3}{2}(\nu_3+\nu_1)+\frac{5}{3}(-9\nu_3-\nu_1)+\frac{1}{6}(81\nu_3+\nu_1)=0.
$$
Conversely, suppose $\lambda \in C^{\infty}(SM)$ satisfies $0=\frac{3}{2}\lambda+\frac{5}{3}VV\lambda+\frac{1}{6}VVVV\lambda$ and write $\lambda=\sum_{m}\lambda_m$ with $\lambda_m \in \mathcal{H}_m$. Hence we obtain
$$
0=\frac{3}{2}\lambda+\frac{5}{3}VV\lambda+\frac{1}{6}VVVV\lambda
=\sum_{m}\left(\frac{3}{2}-\frac{5}{3}m^2+\frac{1}{6}m^4\right)\lambda_m
$$
so that $\lambda_m=0$ unless
$$
0=\frac{3}{2}-\frac{5}{3}m^2+\frac{1}{6}m^4=\frac{1}{6}(m-3)(m-1)(m+1)(m+3).
$$ 
The claim follows.
\end{proof}
Finally, we obtain:
\begin{Proposition}
A thermostat $F=X+\lambda V$ is projective if and only if $\lambda \in \mathcal{H}_{-1}\oplus \mathcal{H}_{1}\oplus \mathcal{H}_{-3}\oplus \mathcal{H}_{3}$. 
\end{Proposition}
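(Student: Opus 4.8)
The plan is to prove the two implications separately, the ``only if'' direction being immediate from the two preceding lemmas: if $F=X+\lambda V$ is projective, the first of them yields the identity $0=\tfrac{3}{2}\lambda+\tfrac{5}{3}VV\lambda+\tfrac{1}{6}VVVV\lambda$, and the second identifies its solutions, so $\lambda\in\mathcal{H}_{-1}\oplus\mathcal{H}_{1}\oplus\mathcal{H}_{-3}\oplus\mathcal{H}_{3}$.

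For the converse I would construct a torsion-free connection $\nabla$ on $TM$ by hand and then appeal to Lemma~\ref{ppn:charprojfol}. Writing $\nabla={}^g\nabla+\tilde{B}$ for a section $\tilde{B}$ of $S^2(T^*M)\otimes TM$, the computation in the proof of Lemma~\ref{ppn:charprojfol} shows that the foliation $\mathcal{F}$ cut out by $\varphi^2_1=\omega_2=0$ is spanned by the vector field $X-b^2_{11}V$, where the function $b^2_{11}(v)=g(\tilde{B}(v)v,Jv)$ depends $\R$-linearly on $\tilde{B}$. Hence, if $\tilde{B}$ is chosen so that $b^2_{11}=-\lambda$, the thermostat $F=X+\lambda V$ spans $\mathcal{F}$, and Lemma~\ref{ppn:charprojfol} then shows that the orbits of $F$ project to $M$ as reparametrised geodesics of $\nabla$; that is, $F$ is projective. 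It therefore suffices to exhibit, for the given $\lambda$, a section $\tilde{B}$ with $b^2_{11}=-\lambda$.

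By the $\R$-linearity just noted I would treat the degree $\pm1$ and degree $\pm3$ parts $\nu_1,\nu_3$ of $\lambda$ by separate models. For $\nu_1$ I would use a Weyl connection: for the connection~\eqref{eq:exprconfcon} one has $\tilde{B}=g\otimes\theta^{\sharp}-\mathrm{Sym}(\theta)$, and a one-line evaluation gives $b^2_{11}(v)=g(\theta^{\sharp},Jv)=(V\theta)(v)$; since $V$ restricts to an isomorphism on $\mathcal{H}_{-1}\oplus\mathcal{H}_{1}$ and the real functions there are exactly the pullbacks of $1$-forms, one can solve $V\theta=-\nu_1$ for a $1$-form $\theta$. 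For $\nu_3$ I would take $\tilde{B}$ of the form $b^i_{jk}=g^{i\ell}A_{\ell jk}$ with $A$ a trace-free symmetric $(0,3)$-tensor, i.e.\ a cubic differential, so that $b^2_{11}(v)=A(Jv,v,v)$, and solve $A(Jv,v,v)=-\nu_3$. The sum of the two tensors is the required $\tilde{B}$, which in addition makes visible the triple $(g,A,\theta)$ attached to $\mathfrak{p}$.

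The step that needs the most care — and the natural place for a sign or normalisation slip — is the claim that the two model maps $\theta\mapsto b^2_{11}$ and $A\mapsto b^2_{11}$ surject onto the full real $(\mathcal{H}_{-1}\oplus\mathcal{H}_{1})$- and $(\mathcal{H}_{-3}\oplus\mathcal{H}_{3})$-parts of $C^{\infty}(SM)$. The first reduces to the fact that real functions on $SM$ of vertical Fourier degree $\pm1$ are precisely pullbacks of $1$-forms together with the invertibility of $V$; the second uses that tracelessness of $A$ kills the would-be degree $\pm1$ term in $A(Jv,v,v)$, leaving a nonzero $\mathrm{SO}(2)$-equivariant map between the rank-two bundles of cubic differentials and of real degree $\pm3$ functions, hence an isomorphism by Schur's lemma. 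Consistently, on each fibre $S^2(T^*_xM)\otimes T_xM$ decomposes under $\mathrm{SO}(2)$ into irreducibles of weights $\pm1$ with multiplicity two — one copy being the $\mathrm{Sym}(\alpha)$-ambiguity of the projective class — and $\pm3$ with multiplicity one, matching the target space of real functions in $\mathcal{H}_{-1}\oplus\mathcal{H}_{1}\oplus\mathcal{H}_{-3}\oplus\mathcal{H}_{3}$.
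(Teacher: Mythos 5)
Your proposal is correct and follows essentially the same route as the paper: the ``only if'' direction is read off from the two preceding lemmas, and the converse is obtained by writing $\lambda$ as a degree $\pm1$ part plus a degree $\pm3$ part, realising these by a $1$-form $\theta$ and a cubic differential $A$, and checking that $F$ annihilates $\varphi^2_1$ and $\omega_2$ for the connection $\weyl+\higgs$, which is exactly the paper's construction via Lemma~\ref{ppn:charprojfol}. The only (harmless) difference is that you justify the surjectivity of $\theta\mapsto b^2_{11}$ and $A\mapsto b^2_{11}$ by $\mathrm{SO}(2)$-equivariance and Schur's lemma, where the paper instead exhibits $A$ explicitly through $\pi^*A=(Va/3+\i a)\omega^3$ and the identities~\eqref{eq:idcubicdiff} and~\eqref{eq:twistedconformalcon}.
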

\begin{proof}
It remains to show that if $\lambda \in \mathcal{H}_{-1}\oplus \mathcal{H}_{1}\oplus \mathcal{H}_{-3}\oplus \mathcal{H}_{3}$, then there exists a torsion-free connection $\nabla$ on $TM$ so that $F \inc \varphi^2_1$ vanishes identically, where $\varphi=(\varphi^i_j)$ denotes the connection form of $\nabla$. We may write 
$$
\lambda=a-V\theta
$$ 
where $a \in C^{\infty}(SM)$ satisfies $9a+VVa=0$ and $\theta$ is a smooth $1$-form on $M$, thought of as a real-valued function on $SM$. Since $9a+VVa=0$, there exists a unique cubic differential $A$ on $M$ so that $\pi^*A=(Va/3+ia)\omega^3$. Hence simple computations show that 
\begin{align}\label{eq:idcubicdiff}
\begin{split}
a(v)&=\Re A(Jv,Jv,Jv)=-\Re A(Jv,v,v)\\
\frac{1}{3}Va(v)&=\Re A(v,v,v)=-\Re A(v,Jv,Jv)
\end{split}
\end{align}
for all $v \in SM$. Let $\higgs$ be the unique $1$-form on $M$ with values in the endomorphisms of $TM$ satisfying
\begin{equation}\label{eq:defrealhiggs}
g(\higgs(v_1)v_2,v_3)=\mathrm{Re}\, A(v_1,v_2,v_3)
\end{equation}
for all tangent vectors $v_1,v_2,v_3 \in TM$. On $TM$ consider the torsion-free connection $\nabla=\weyl+\higgs$, where $\weyl$ is the Weyl connection
$$
\weyl={}^g\nabla+g\otimes \theta^{\sharp}-\mathrm{Sym}(\theta). 
$$
Using~\eqref{eq:defconfconmatrix} and~\eqref{eq:idcubicdiff}, we compute that the connection form of  $\nabla$ is
\begin{multline}\label{eq:twistedconformalcon}
\varphi=\begin{pmatrix} -\theta\omega_1-V(\theta)\omega_2 & -V(\theta)\omega_1+\theta\omega_2 -\psi \\ \psi+V(\theta)\omega_1-\theta\omega_2 & -\theta\omega_1-V(\theta)\omega_2\end{pmatrix}\\
+\begin{pmatrix} V(a)/3\omega_1-a\omega_2 & -a\omega_1-V(a)/3\omega_2 \\ -a\omega_1-V(a)/3\omega_2 & -V(a)/3\omega_1+a\omega_2\end{pmatrix}.
\end{multline}
In particular, we have
$$
\varphi^2_1=\psi+\left(V(\theta)-a\right)\omega_1-\left(\theta+V(a)/3\right)\omega_2,
$$
so that $F\inc \varphi^2_1=0$.
\end{proof}

\subsection{The effect of a conformal change}

Summarising the previous subsection, we have seen that if $\nabla$ is a torsion-free connection on $TM$ and we fix a Riemannian metric $g$ on $M$, then we may write $\nabla={}^g\nabla+\tilde{B}$ for some endomorphism-valued $1$-form $\tilde{B}$ on $M$. The thermostat on $SM$ defined by $\lambda=-b^2_{11}$ has the property that its orbits project to $M$ to become the geodesics of $\nabla$ up to parametrisation. Moreover, we obtain a $1$-form $\theta \in \Omega^1(M)$ as well as a cubic differential $A \in \Gamma(\can^3)$, so that the connection $\nabla$ shares its geodesics -- up to parametrisation -- with the projections to $M$ of the orbits of the projective thermostat defined by $\lambda=a-V\theta$, where $a$ represents the imaginary part of $A$.  

Next we compute how $\theta$ and $A$ transform under conformal change of the metric. As a consequence, we obtain:
\begin{Proposition}\label{prop:uniquerep}
Let $\nabla$ be a torsion-free connection on $TM$. Then the choice of a conformal structure $[g]$ on $M$ determines a unique Weyl connection $\weyl$ for $[g]$ and a unique section $\Phi$ of $K^2\otimes\ov{\acan}$ so that $\weyl+\Re\Phi$ is projectively equivalent to $\nabla$. 
\end{Proposition}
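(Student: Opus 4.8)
The plan is to produce a valid pair $(\weyl,\Phi)$ by applying the analysis of Section~\ref{section:thermo} to an arbitrary representative $g\in[g]$, and then to read off from the uniqueness part below that this pair is independent of the choice of $g$. So pick any $g\in[g]$ and write $\nabla={}^g\nabla+\tilde B$. By Section~\ref{section:thermo}, $\tilde B$ determines a $1$-form $\theta\in\Omega^1(M)$ and a cubic differential $A\in\Gamma(\can^3)$ such that $\nabla$ is projectively equivalent to $\weyl_g+\higgs$, where $\weyl_g={}^g\nabla+g\otimes\theta^{\sharp}-\mathrm{Sym}(\theta)$ is the associated Weyl connection for $[g]$ (cf.~\eqref{eq:exprconfcon}) and $\higgs\in\Gamma(S^2(T^*M)\otimes TM)$ is determined by $g(\higgs(v_1)v_2,v_3)=\Re A(v_1,v_2,v_3)$. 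Setting $\Phi:=A/\area$, the introduction recalls that $\Phi$ is a section of $\can^2\otimes\ov{\acan}$ depending only on the conformal structure, and raising the last index of $\Re A$ with $g$ cancels the area form, so that $\higgs=\Re\Phi$. Hence $(\weyl_g,\Phi)$ satisfies the conclusion.

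\textbf{Uniqueness.} Suppose $(\weyl_1,\Phi_1)$ and $(\weyl_2,\Phi_2)$ both satisfy the conclusion. Since $\weyl_1+\Re\Phi_1$ and $\weyl_2+\Re\Phi_2$ are both projectively equivalent to $\nabla$, the Cartan--Eisenhart--Weyl criterion provides a $1$-form $\alpha$ on $M$ with
$$
(\weyl_1-\weyl_2)+(\Re\Phi_1-\Re\Phi_2)=\mathrm{Sym}(\alpha).
$$
Fix $g\in[g]$, write $\weyl_j={}^g\nabla+g\otimes\theta_j^{\sharp}-\mathrm{Sym}(\theta_j)$ with $\theta_j$ the associated (and unique) $1$-form, and let $\higgs_j$ be determined by $A_j:=\area\,\Phi_j$ as above, so $\Re\Phi_j=\higgs_j$. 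Putting $\beta:=\theta_1-\theta_2$, the identity rearranges to
$$
\higgs_1-\higgs_2=\mathrm{Sym}(\alpha+\beta)-g\otimes\beta^{\sharp}.
$$
The left-hand side is a section of $S^3_0(T^*M)$, which has $\mathrm{SO}(2)$-weight $\pm 3$, while the right-hand side is built linearly from $1$-forms and the metric and hence lies in the weight $\pm 1$ summand of $S^2(T^*M)\otimes TM$; as these summands meet only in the zero section, both sides vanish. Thus $\higgs_1=\higgs_2$, and since a cubic differential is recovered from the real symmetric $(0,3)$-tensor it represents, $A_1=A_2$ and therefore $\Phi_1=\Phi_2$. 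It remains to treat $\mathrm{Sym}(\alpha+\beta)=g\otimes\beta^{\sharp}$: taking traces (in dimension $2$) gives $3(\alpha+\beta)=\beta$, so $\tfrac13\mathrm{Sym}(\beta)=g\otimes\beta^{\sharp}$, and evaluating this on a unit vector $v$ with $\beta(v)=0$ gives $\beta^{\sharp}=0$. Hence $\beta=0$, that is, $\theta_1=\theta_2$ and $\weyl_1=\weyl_2$. Feeding this back, $(\weyl_g,\Phi)$ from the existence part is independent of $g\in[g]$, which finishes the proof.

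\textbf{Main obstacle.} The step carrying genuine content is the decomposition used above: for a fixed conformal structure, the pure-trace deformations $\mathrm{Sym}(\alpha)$ (the infinitesimal direction of projective equivalence), the Weyl deformations $g\otimes\theta^{\sharp}-\mathrm{Sym}(\theta)$, and the $\Re\Phi$ together realise $S^2(T^*M)\otimes TM\simeq T^*M\oplus T^*M\oplus S^3_0(T^*M)$ with pairwise trivial intersection --- equivalently, the introduction's splitting $V\simeq T^*M\oplus S^3_0(T^*M)$ applied after quotienting out pure trace. This is elementary $\mathrm{SO}(2)$-representation theory, but it is precisely what makes $\theta$ (hence $\weyl$) and $A$ (hence $\Phi$) canonical, and it is the point at which the two ``ends'' discussed in the introduction genuinely decouple. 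Everything else is either cited --- the Cartan--Eisenhart--Weyl criterion, the classification of Weyl connections, and the extraction of $(\theta,A)$ from Section~\ref{section:thermo} --- or a one-line computation.
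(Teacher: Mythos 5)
Your proof is correct and follows essentially the paper's own route: after invoking the Cartan--Eisenhart--Weyl criterion, you decouple the weight-$\pm 3$ (cubic) part from the weight-$\pm 1$ parts, which is exactly what the paper does by evaluating on $(v,Jv)$ and pairing with $v$ to compare $\mathcal{H}_{-3}\oplus\mathcal{H}_3$ against $\mathcal{H}_{-1}\oplus\mathcal{H}_1$. The only cosmetic differences are that you work throughout with a fixed representative $g\in[g]$ and dispose of the residual weight-$\pm 1$ identity by a trace computation, whereas the paper runs the same algebra as a conformal-invariance statement for the triple $(g,A,\theta)$ under $g\mapsto \ee^{2u}g$, obtaining the transformation laws $\hat{A}=\ee^{2u}A$ and $\hat{\theta}=\theta+\d u$ along the way.
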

\begin{proof}
Let $g \mapsto \hat{g}=\ee^{2u}g$ be a conformal change of the metric, where $u \in C^{\infty}(M)$. For the new metric $\hat{g}$ there exists a $1$-form $\hat{\theta}$ and a cubic differential $\hat{A}$ on $M$ so that $\weyl+\higgs$ and $\hatweyl+\hathiggs$ are projectively equivalent. Here $\hathiggs$ denotes the $1$-form constructed from $\hat{A}$ by using the metric $\hat{g}$. Projective equivalence corresponds to the existence of a $1$-form $\alpha$ on $M$ so that
$$
\weyl+\higgs=\hatweyl+\hathiggs+\mathrm{Sym}(\alpha)
$$
Using~\eqref{eq:exprconfcon} as well as (see~\cite[Theorem 1.159]{MR867684})
\begin{equation}\label{eq:levcivconfchange}
{}^{\exp(2u)g}\nabla={}^g\nabla-g\otimes{}^g\nabla u+\mathrm{Sym}(\d u)
\end{equation}
this is equivalent to
\begin{multline*}
{}^g\nabla+g\otimes\theta^{\sharp}-\mathrm{Sym}(\theta)+\higgs={}^g\nabla-g\otimes{}^g\nabla u\\+\mathrm{Sym}(\d u)+\ee^{2u}g\otimes\hat{\theta}^{\hat{\sharp}}-\mathrm{Sym}(\hat{\theta})+\hathiggs+\mathrm{Sym}(\alpha)
\end{multline*}
or
$$
g\otimes\left(\theta^{\sharp}+{}^g\nabla u-\hat{\theta}^{\sharp}\right)+\higgs-\hathiggs=\mathrm{Sym}\left(\beta\right),
$$
where $\beta=\alpha+\theta+\d u-\hat{\theta}$. Evaluating this equation on the pair $(v,Jv)$ with $v$ a unit tangent vector with respect to $g$ gives 
$$
\higgs(v)Jv-\hathiggs(v)Jv=\mathrm{Sym}\left(\beta\right)(v,Jv).
$$
Computing the inner product with the tangent vector $v$ yields
$$
\mathrm{Re}\,A(v,Jv,v)-\ee^{-2u}\mathrm{Re}\,\hat{A}(v,Jv,v)=\beta(Jv).
$$
Thought of as an identity for functions on $SM$, the left hand side lies in $\mathcal{H}_{-3}\oplus\mathcal{H}_3$ whereas the right hand side lies in $\mathcal{H}_{-1}\oplus \mathcal{H}_1$ and hence they can only be equal if both sides vanish identically. Consequently, it follows that $\beta=0$ and that 
$$\hat{A}=\ee^{2u}A.
$$ 
Therefore, $\higgs=\hathiggs$ and 
\begin{equation}\label{eq:confchange1form}
\hat{\theta}=\theta+\d u
\end{equation} so that $\alpha=0$ as well as $\weyl=\hatweyl$. 

In particular, we see that both $\weyl$ and $B$ do only depend on the conformal equivalence class of $g$. We may define a section $\Phi$ of $K^2\otimes \ov{\acan}$ by $\Phi\area=A$, where $\area$ denotes the area form of $g$. Comparing with~\eqref{eq:defrealhiggs}, we see that $B$ is the real part of $\Phi$. 
\end{proof}

\section{Holomorphic curves}

It is natural to ask whether for a given torsion-free connection $\nabla$ on $TM$ one can always (at least locally) choose a conformal structure $[g]$ on $M$ so that $\Phi$ vanishes identically. Equivalently, whether every torsion-free connection $\nabla$ on $TM$ is locally projectively equivalent to a Weyl connection $\weyl$. This question was answered in the affirmative in~\cite{MR3144212}, where it is also observed that the problem is equivalent to finding a suitable holomorphic curve into a complex surface fibering over $M$. Here we will briefly review this observation and use it do derive a non-linear PDE for the Beltrami differential of the sought after conformal structure. 

\begin{Remark}
Given that one can locally always find a conformal structure so that $\Phi$ vanishes identically, one might wonder whether it is possible to simultaneously pick a conformal metric so that the $1$-form $\theta$ is closed. Indeed,~\eqref{eq:levcivconfchange} and~\eqref{eq:confchange1form} imply that the additional closedness condition corresponds to $\nabla$ being locally  projectively equivalent to a Levi-Civita connection of some metric. However, this is not always possible, see~\cite{MR2581355}.
\end{Remark}

\subsection{A complex surface}

Inspired by the twistorial construction of holomorphic projective structures by Hitchin~\cite{MR699802}, it was shown in~\cite{MR728412} and~\cite{MR812312} and how to construct a ‘twistor space‘ for smooth projective structures. Let $\nabla$ be a torsion-free connection on $TM$ and $\varphi=(\varphi^i_j) \in \Omega^1(P,\mathfrak{gl}(2,\R))$ its connection form on the frame bundle $P$. We can use $\varphi$ to construct a complex structure on the quotient $P/\mathrm{CO}(2)$. By definition, an element of $P/\mathrm{CO}(2)$ gives a frame in some tangent space of $M$, well defined up to rotation and scaling. Therefore, the conformal structures on $M$ are in one-to-one correspondence with the sections of the fibre bundle $P/\mathrm{CO}(2) \to M$ whose fibre is $\mathrm{GL}^+(2,\R)/\mathrm{CO}(2)$, that is, the open disk. We will construct a complex structure on $P/\mathrm{CO}(2)$ in terms of its $(1,\! 0)$-forms, or more precisely, the pullbacks of the $(1,\! 0)$-forms to $P$. Recall that the Lie algebra $\mathfrak{co}(2)$ of $\mathrm{CO}(2)$ is spanned by the matrices
$$
\begin{pmatrix} 1 & 0 \\ 0 & 1\end{pmatrix}\quad \text{and}\quad \begin{pmatrix} 0 & -1 \\ 1 & 0\end{pmatrix}.
$$
Consequently, the complex-valued $1$-forms on $P$ that are semibasic for the quotient projection $P \to P/\mathrm{CO}(2)$ are spanned by the form $\omega$ and 
$$
\zeta=(\varphi^1_1-\varphi^2_2)+i (\varphi^1_2+\varphi^2_1)
$$ 
as well as their complex conjugates. Recall that we have $\left(R_{r\mathrm{e}^{i\phi}}\right)^*\omega=\frac{1}{r}\mathrm{e}^{-i\phi}$ and using that $\varphi$ satisfies the equivariance property $R_h^*\varphi=h^{-1}\varphi h$ for all $h \in \mathrm{GL}^+(2,\R)$, we compute $\left(R_{r\mathrm{e}^{i\phi}}\right)^*\zeta=\mathrm{e}^{-2i\phi}\zeta$. 
It follows that there exists a unique almost complex structure $J$ on $P/\mathrm{CO}(2)$ whose $(1,\!0)$-forms pull back to $P$ to become linear combinations of the forms $\omega,\zeta$. The almost complex structure $J$ can be shown to only depend on the projective equivalence class of $\nabla$ and moreover, an application of the Newlander--Nirenberg theorem shows that $J$ is always integrable, see~\cite{MR3144212} for details. 

\subsection{M\"obius action}

In our setting it is convenient to reduce the frame bundle $P$ to the unit tangent bundle $SM$ of some fixed metric $g$. In order to get a handle on the complex surface $P/\mathrm{CO}(2)$ after having carried out this reduction, we interpret the disk bundle $P/\mathrm{CO}(2) \to M$ as an associated bundle to the frame bundle $P$. This requires an action of the structure group $\mathrm{GL}^+(2,\R)$ on the open disk and this is what we compute next.

The group $\mathrm{GL}^+(2,\R)$ acts from the left on the lower half plane 
$$
-\mathbb{H}:=\left\{w \in \C : \Im(w)<0\right\}
$$ 
by M\"obius transformations, where $w$ denotes the standard coordinate on $\C$. We let $\mathbb{D}\subset \C$ denote the open unit disk. Identifying $-\mathbb{H}$ with $\mathbb{D}$ via the M\"obius transformation 
$$
-\mathbb{H} \to \mathbb{D}, \quad w \mapsto -\left(\frac{w+\i}{w-\i}\right)
$$
we obtain an induced action of $\mathrm{GL}^+(2,\R)$ on $\mathbb{D}$ making this transformation equivariant 
\begin{equation}\label{eq:moeb}
\begin{pmatrix} a & b \\ c & d \end{pmatrix} \cdot z=\frac{\i z(a+d)+z(b-c)-\i(a-d)+(b+c)}{-\i z(a-d)-z(b+c)+\i(a+d)-(b-c)}.
\end{equation}
The stabiliser subgroup of the point $z=0$ consists of elements in $\mathrm{GL}^+(2,\R)$ satisfying $a=d$ and $b+c=0$, i.e., the linear conformal group $\mathrm{CO}(2)$. Consequently, we have $\mathbb{D}\simeq \mathrm{GL}^+(2,\R)/\mathrm{CO}(2)$ and we obtain a projection
$$
\lambda : \mathrm{GL}^+(2,\R) \to \mathbb{D}, \quad \begin{pmatrix} a & b \\ c & d \end{pmatrix} \mapsto \begin{pmatrix} a & b \\ c & d \end{pmatrix}\cdot 0=\frac{-\i(a-d)+(b+c)}{\i(a+d)-(b-c)}.
$$
In particular, a mapping $z : N \to \mathbb{D}$ from a smooth manifold $N$ into $\mathbb{D}$ is covered by a map 
$$
\tilde{z}=\begin{pmatrix} \frac{1-|z|^2}{(1+z)(1+\ov z)} & \frac{\i(z-\ov{z})}{(1+z)(1+\ov z)} \\ 0 & 1\end{pmatrix}.
$$
into $\mathrm{GL}^+(2,\R)$. Equivalently, we have $\tilde{z}\cdot 0=z$ or $z \cdot \tilde{z}=0$, where as usual we turn the left action into a right action by the definition $z\cdot \tilde{z}:=\tilde{z}^{-1}\cdot z$. 
 
Let $\rho : Z \to M$ denote the disk-bundle associated to the above $\mathrm{GL}^+(2,\R)$ action on $\mathbb{D}$. Suppose $z : P \to \mathbb{D}$ represents a section of $Z \to M$ so that $z$ is a $\mathrm{GL}^+(2,\R)$-equivariant map.  For every coframe $u \in P$ the pair $(u,z(u)) \in P \times \mathbb{D}$ lies in the same $\mathrm{GL}^+(2,\R)$ orbit as 
\begin{equation}\label{eq:bundlemapup}
(u\cdot \tilde{z}(u),z(u)\cdot\tilde{z}(u))=(u\cdot \tilde{z}(u),0).
\end{equation}
Therefore, the map $z$ gives for every point $p \in M$ a coframe $u\cdot \tilde{z}(u)$ which is unique up to the action of $\mathrm{CO}(2)$. It follows that the bundle $Z \to M$ is isomorphic to $P/\mathrm{CO}(2) \to M$, as desired.

Let $\Upsilon : P\times \mathbb{D} \to P$ be the map defined by~\eqref{eq:bundlemapup}. We will next compute the pullback of $\omega,\zeta$ under $\Upsilon$. Note that we may write $\Upsilon=R\circ\left(\mathrm{Id}_P\times \tilde{z}\right)$ where $R : P \times \mathrm{GL}^+(2,\R) \to P$ denotes the $\mathrm{GL}^+(2,\R)$ right action of $P$. Recall the standard identities 
$$
R^*\varphi=h^{-1}\varphi h+ h^{-1}\d h\quad \text{and} \quad R^*\omega=h^{-1}\omega,
$$
where $h : P\times \mathrm{GL}^+(2,\R) \to \mathrm{GL}^+(2,\R)$ denotes the projection onto the latter factor. From this we compute
\begin{equation}\label{eq:newcplxstruc}
\omega_\Upsilon:=\Upsilon^*\omega=\tilde{z}^{-1}\omega=\left(\frac{1+\ov{z}}{1-|z|^2}\right)\left(\omega+z\ov{\omega}\right).
\end{equation}
and
\begin{equation}\label{eq:pullbackupsilon1}
\varphi_{\Upsilon}:=\Upsilon^*\varphi=\tilde{z}^{-1}\varphi \tilde{z}+\tilde{z}^{-1}\d\tilde{z}
\end{equation}
We also obtain $\zeta_{\Upsilon}=\Upsilon^*\zeta=(\varphi_{\Upsilon})^1_1-(\varphi_{\Upsilon})^2_2+\i\left((\varphi_{\Upsilon})^1_2+(\varphi_{\Upsilon})^2_1\right)$. Writing
$$
\chi=\frac{1}{2}\left(3(\varphi^1_1+\varphi^2_2)+\i(\varphi^2_1-\varphi^1_2)\right),
$$
and using~\eqref{eq:pullbackupsilon1}, a tedious but straightforward calculation gives
\begin{equation}\label{eq:pullbackzetaupsilon13}
\zeta_\Upsilon=\frac{2(1+\ov{z})}{(|z|^2-1)(z+1)}\left(\d z-\frac{1}{2}\zeta+\frac{1}{2}z^2\ov{\zeta}+z\chi-z\bar{\chi}\right). 
\end{equation}
\begin{Remark}
The complex-valued $1$-form $\chi$ is chosen so that $\chi,\ov{\chi},\omega,\ov{\omega},\zeta,\ov{\zeta}$ span the complex-valued $1$-forms on $P$. Clearly, this condition does not pin down $\chi$ uniquely. The particular choice is so that in the absence of $\theta$ the form $\chi$ becomes the connection form of the Chern connection on $\acan$ upon reducing to $SM$, see~\eqref{eq:reductionchism} below. 
\end{Remark}
The complex structure on $Z$ does only depend on the projective equvialence class of $\nabla$. Thus, after possibly replacing $\varphi$ with a projectively equivalent connection, we can assume that the torsion-free connection on $TM$ corresponding to $\varphi$ is of the form $\weyl+\higgs$ for some $1$-form $\theta$ and some cubic differential $A$ on $M$. On the unit tangent bundle $SM$ of $g$ the connection form of $\weyl+\higgs$ takes the form~\eqref{eq:twistedconformalcon}. Using this equation and reducing to $SM\subset P$ yields the following identities on $SM$%\todo[inline]{T: Details to be written down once the overall structure of the article is clearer}
\begin{align}\label{eq:reductionchism}
\begin{split}
\zeta&=2a_{-3}\ov{\omega}, \\
\chi&=\i\psi-4\theta_1\omega-2\theta_{-1}\ov{\omega},
\end{split}
\end{align}
Recall, we write $a_3=\frac{1}{3}Va+\i a$ and $a_{-3}=\ov{a_3}$ as well as $\theta_1=\frac{1}{2}(\theta-\i V\theta)$ and $\theta_{-1}=\ov{\theta_1}$. Also, the connection form $\kappa$ of the induced Weyl connection is $\kappa=\i \psi-2\theta_1\omega$, see~\eqref{eq:defconfconmatrix}. Therefore, we have
$$
\chi=2\kappa+\ov{\kappa}.
$$

The $\mathrm{SO}(2)$-action induced by~\eqref{eq:moeb} is
$$
\begin{pmatrix}\cos \phi & -\sin\phi \\ \sin\phi & \cos\phi\end{pmatrix}\cdot z=\frac{2\i z\cos\phi-2z\sin\phi}{2\i\cos\phi+2\sin\phi}=\mathrm{e}^{2\i\phi}z
$$
and hence the equivariance property of a function $z : SM \to \mathbb{D}$ representing a section of $Z \to M$ becomes $\left(R_{\mathrm{e}^{\i\phi}}\right)^*z=\mathrm{e}^{-2\i\phi}z$, that is, $z$ represents a section of $\can^{-2}$. Since we have a metric, we have an identification $\acan\simeq \ov{\can}$ and hence $\can^{-2}\simeq \acan\otimes \ov{\can}$. In particular, we may write
\begin{equation}\label{eq:equationfordz}
\d z=z^{\prime}\omega+z^{\prime\prime}\ov\omega+\ov{\kappa}z-\kappa z
\end{equation}
For unique complex-valued functions $z^{\prime}$ and $z^{\prime\prime}$ on $SM$. 
Consequently, using \eqref{eq:pullbackzetaupsilon13}, \eqref{eq:reductionchism} and \eqref{eq:equationfordz} we obtain 
\begin{align}\label{eq:bzeta}
\begin{split}
\left(\frac{(|z|^2-1)(z+1)}{2(1+\ov{z})}\right)\zeta_\Upsilon&=z^{\prime}\omega+z^{\prime\prime}\ov{\omega}+\ov{\kappa}z-\kappa z-a_{-3}\ov{\omega}+z^2a_3\omega\\
&\phantom{=}+z(2\kappa+\ov{\kappa})-z(2\ov{\kappa}+\kappa)\\
&=\left(z^{\prime}+z^2a_3\right)\omega+\left(z^{\prime\prime}-a_{-3}\right)\ov{\omega}.
\end{split}
\end{align}
 In order to connect the expressions for $\omega_\Upsilon$ and $\zeta_\Upsilon$ to the condition of $z$ representing a conformal structure $[\hat{g}]$ that defines a holomorphic curve into $Z$, we use the following elementary lemma:
\begin{Lemma}\label{lem:holcurvlemma}
Let $Z$ be a complex surface and $\omega,\zeta \in \Omega^1(Z,\C)$ a basis for the $(1,\! 0)$-forms of $Z$. Suppose $M\subset Z$ is a smooth surface on which $\omega\wedge\ov{\omega}$ is non-vanishing. Then $M$ admits the structure of a holomorphic curve -- that is, a complex $1$-dimensional submanifold of $Z$ -- if and only if $\omega\wedge\zeta$ vanishes identically on $M$. 
\end{Lemma}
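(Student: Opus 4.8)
The plan is to reduce the lemma to a pointwise statement about the real tangent planes of $M$. Write $\iota\colon M\hookrightarrow Z$ for the inclusion. Recall the standard fact that a real surface contained in a complex surface carries the structure of a holomorphic curve — i.e.\ of a complex $1$-dimensional submanifold — precisely when every tangent plane $T_pM\subset T_pZ$ is a complex line, that is, invariant under the complex structure $J$ of $Z$. Indeed, if all the $T_pM$ are $J$-invariant, then $J$ restricts to a smooth almost complex structure on $M$; being in complex dimension one this is automatically integrable, so $(M,J|_{TM})$ is a Riemann surface and $\iota$ is holomorphic, whence $M$ is a complex submanifold; the converse is immediate. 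Thus it suffices to show that, under the standing hypothesis that $\iota^*(\omega\wedge\ov\omega)$ is nowhere zero, the plane $T_pM$ is $J$-invariant for every $p\in M$ if and only if $\iota^*(\omega\wedge\zeta)\equiv 0$ on $M$.

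For the pointwise statement I would argue as follows. The hypothesis says that $\{\iota^*\omega,\iota^*\ov\omega\}$ is a complex coframe on $M$; in particular $\iota^*\ov\omega_p\neq 0$ for all $p$. Since $\{\omega,\zeta\}$ is a basis for the $(1,\!0)$-forms of $Z$, one has $T^{1,0}_pZ=\ker\ov\omega_p\cap\ker\ov\zeta_p$, so the subspace $(T_pM\otimes\C)\cap T^{1,0}_pZ$ is the common kernel of the two functionals $\iota^*\ov\omega_p,\iota^*\ov\zeta_p$ on the two-dimensional space $T_pM\otimes\C$; because $\iota^*\ov\omega_p\neq 0$ this common kernel has dimension $0$ or $1$, and it has dimension $1$ exactly when $\iota^*\ov\zeta_p$ is a multiple of $\iota^*\ov\omega_p$. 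On the other hand, $T_pM$ is $J$-invariant if and only if $T_pM\otimes\C$ equals the (automatically direct) sum $((T_pM\otimes\C)\cap T^{1,0}_pZ)\oplus((T_pM\otimes\C)\cap T^{0,1}_pZ)$; as the two summands are interchanged by complex conjugation they have equal dimension, so this holds iff $(T_pM\otimes\C)\cap T^{1,0}_pZ$ is one-dimensional. Combining the two observations, $T_pM$ is $J$-invariant precisely when $\iota^*\ov\zeta_p$ and $\iota^*\ov\omega_p$ are proportional, i.e.\ when $\iota^*(\ov\omega\wedge\ov\zeta)=\ov{\iota^*(\omega\wedge\zeta)}$ vanishes at $p$. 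Letting $p$ range over $M$ yields the claim.

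Equivalently — and this is closer to how I would present the ``if'' direction in writing — once $\iota^*(\omega\wedge\zeta)\equiv 0$, the nonvanishing of $\iota^*\omega$ lets us write $\iota^*\zeta=c\,\iota^*\omega$ for a smooth function $c$ on $M$, the coframe $\{\iota^*\omega,\iota^*\ov\omega\}$ determines an almost complex structure on $M$ for which $\iota^*\omega$ has type $(1,\!0)$, and every $(1,\!0)$-form of $Z$, being a combination of $\omega$ and $\zeta$, pulls back to a multiple of $\iota^*\omega$ and hence to a form of type $(1,\!0)$ on $M$; this exactly says $\iota$ is holomorphic. I do not anticipate any real obstacle: the lemma is elementary, and the only points that deserve a word of care are the automatic integrability of an almost complex structure in complex dimension one and the role of the hypothesis $\omega\wedge\ov\omega\neq 0$, which is precisely what forbids $T_pM$ from being totally real and what supplies the complex coframe on $M$.
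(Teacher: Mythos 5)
Your proof is correct and is essentially the paper's argument: the paper's two-line proof simply notes that since $\omega,\ov{\omega}$ span the complex-valued $1$-forms on $M$, the pullback of every $(1,\!0)$-form of $Z$ (a combination of $\omega$ and $\zeta$) is of type $(1,\!0)$ for the induced structure on $M$ exactly when $\omega\wedge\zeta$ pulls back to zero — which is your closing paragraph. Your pointwise discussion of $J$-invariant tangent planes and automatic integrability in complex dimension one just makes explicit the standard facts the paper leaves implicit, so no substantive difference.
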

\begin{proof}
Since $\omega\wedge\ov{\omega}$ is non-vanishing on $M$, the forms $\omega$ and $\ov{\omega}$ span the complex-valued $1$-forms on $M$. Since $M$ is a complex submanifold of $Z$ if and only if the pullback of a $(1,\! 0)$-form on $Z$ is a $(1,\! 0)$-form on $M$, the claim follows. 
\end{proof}
The reduction of $P$ to $SM$ identifies $Z$ with $SM\times_{\mathrm{SO}(2)} \mathbb{D}$. Now suppose the conformal structure $[\hat{g}] : M \to Z$ is represented by the map $z : SM \to \mathbb{D}$. If $v : U \to SM$ is a local section of $\pi : SM \to M$, then $[\hat{g}]|_U : U \to Z$ is covered by the map $(\mathrm{Id}_{SM}\times z)\circ v : U \to SM \times \mathbb{D}$. Recall that the complex structure on $Z$ has the property that its $(1,\! 0)$-forms pull-back to become linear combination of $\omega_\Upsilon$ and $\zeta_\Upsilon$. Using the expressions~\eqref{eq:newcplxstruc} and~\eqref{eq:bzeta} for the pullbacks of $\omega_\Upsilon$ and $\zeta_\Upsilon$ to $SM$ we obtain
$$
\omega_\Upsilon\wedge\zeta_\Upsilon=-\frac{2(1+\ov{z})^2}{(|z|^2-1)^2(z+1)}\left(z^{\prime\prime}-zz^{\prime}-z^3a_3-a_{-3}\right)\omega\wedge\ov{\omega}.
$$
In particular, since $v : U \to SM$ is a $\pi$-section and $\omega$ and $\ov{\omega}$ are $\pi$-semibasic, the pullback $v^*(\omega_\Upsilon\wedge\zeta_\Upsilon)$ vanishes if and only if $\omega_\Upsilon\wedge\zeta_\Upsilon$ vanishes on $\pi^{-1}(U)$. Thus, Lemma~\ref{lem:holcurvlemma} implies that $z$ represents a holomorphic curve if and only if 
\begin{equation}\label{eq:beltramipdesm}
z^{\prime\prime}-zz^{\prime}=z^3a_3+a_{-3}. 
\end{equation}
\subsection{The Beltrami differential}

So far we have not explicitly tied the conformal structure $[\hat{g}]$ to the function $z : SM \to \mathbb{D}$ representing it. In order to do this we first recall the Beltrami differential. The choice of a metric $\hat{g}$ on $M$ allows to define the functions
$$
p(x,v)=\hat{g}(v,v), \qquad r(x,v)=\hat{g}(v,Jv) \quad\text{and}\quad q(x,v)=\hat{g}(Jv,Jv)
$$
on $SM$. The orientation compatible complex structure $\hat{J}$ on $M$ induced by the conformal equivalence class of $\hat{g}$ has matrix representation
$$
\hat{J}=\frac{1}{\sqrt{pq-r^2}}\begin{pmatrix} -r & -q \\ p & r\end{pmatrix}.
$$
In particular, we compute that the $(1,\! 0)$-forms with respect to $\hat{J}$ pull-back to $SM$ to become complex multiples of
\begin{equation}\label{eq:omegaj}
\omega_{\hat{J}}:=\frac{1}{2}\left(\omega-i\hat{J}\omega\right)=\left(\frac{p+q+2\sqrt{pq-r^2}}{4\sqrt{pq-r^2}}\right)\left(\omega+\mu\ov{\omega}\right)
\end{equation}
where
$$
\mu=\frac{(p-q)+2\i r}{p+q+2\sqrt{pq-r^2}}
$$
is the~\textit{Beltrami coefficient} of $\hat{J}$. Clearly, $\mu$ does only depend on the conformal equivalence class $[\hat{g}]$ of $\hat{g}$. Moreover, the function $\mu$ represents a $(0,\! 1)$-form on $M$ with values in $\acan$ called the ~\textit{Beltrami differential} of $[\hat{g}]$, which -- by abuse of language -- we denote by $\mu$ as well.

The reduction of $P$ to the unit tangent bundle $SM$ of $g$ turned $\omega$ into a basis for the $(1,\! 0)$-forms with respect to the complex structure induced by $g$ and the orientation. The mapping $z$ represents a conformal structure $[\hat{g}]$ and consequently, induces an orientation compatible complex structure $\hat{J}$ whose $(1,\! 0)$-forms we computed in~\eqref{eq:newcplxstruc}. Comparing this expression with the formula~\eqref{eq:omegaj} for the Beltrami coefficient shows that we obtain the same $(1,\! 0)$-forms if and only if $z=\mu$. Remember, $z^{\prime}$ and $z^{\prime\prime}$ represent the $(1,\! 0)$ and $(0,\! 1)$ part of the derivative of $z$ with respect to the connection $\D$ induced by the Weyl connection $\weyl$. Furthermore, the function $a_{3}$ represents the cubic differential $A$ or equivalently, the form $\Phi$, since $\Phi \area=A$ and $\area$ is represented by the constant function $1$ on $SM$. Using equation~\eqref{eq:beltramipdesm} and the fact that $\mathfrak{p}$ contains a Weyl connection with respect to $[\hat{g}]$ if and only if $[\hat{g}] : M \to Z$ is a holomorphic curve~\cite[Theorem 3]{MR3144212}, we have thus shown:
\begin{Proposition}\label{ppn:mainpdecplx}
Let $(M,[g])$ be a Riemann surface equipped with a projective structure $\mathfrak{p}$ given in terms of $(\D,\Phi)$. Then $\mathfrak{p}$ contains a Weyl connection with respect to the conformal structure defined by the Beltrami differential $\mu$ if and only if 
\begin{equation}\label{eq:pdebeltramicoeff}
\D^{\prime\prime}\mu-\mu\,\D^{\prime}\mu=\Phi\mu^3+\ov{\Phi}.
\end{equation}
\end{Proposition}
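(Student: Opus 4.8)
The plan is to read off the proposition from the material already assembled in this section. By \cite[Theorem 3]{MR3144212}, the projective structure $\mathfrak{p}$ contains a Weyl connection with respect to a conformal structure $[\hat{g}]$ on $M$ precisely when the associated section $[\hat{g}]\colon M\to Z$ of the disk bundle $\rho\colon Z\to M$ is a holomorphic curve for the complex structure that $\mathfrak{p}$ induces on $Z$. Representing $[\hat{g}]$ by an $\mathrm{SO}(2)$-equivariant function $z\colon SM\to\mathbb{D}$ and using the formulas \eqref{eq:newcplxstruc} and \eqref{eq:bzeta} for the pullbacks of the spanning $(1,0)$-forms $\omega_\Upsilon$ and $\zeta_\Upsilon$, Lemma~\ref{lem:holcurvlemma} shows that this holomorphicity is equivalent to the single scalar identity \eqref{eq:beltramipdesm}, that is, $z''-zz'=z^3a_3+a_{-3}$ on $SM$. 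The whole task is therefore to rewrite \eqref{eq:beltramipdesm} as the claimed PDE on $M$, which amounts to three identifications.

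First, I would identify $z$ with the function representing the Beltrami differential $\mu$ of $[\hat{g}]$: the $(1,0)$-forms on $M$ cut out by $\mathfrak{p}$'s complex structure on $Z$ pull back to $SM$ as complex multiples of $\omega_\Upsilon$ in \eqref{eq:newcplxstruc}, the $(1,0)$-forms of $\hat{J}$ pull back as complex multiples of $\omega_{\hat{J}}$ in \eqref{eq:omegaj}, and comparing the two expressions these families coincide exactly when $z=\mu$; both sides depend only on the conformal class, so this is a genuine identification of sections of $\can^{-2}\simeq\acan\otimes\ov{\can}$. Second, because $z$ represents a section of this bundle, \eqref{eq:equationfordz} is nothing but the structure equation \eqref{eq:struceqsecmell} for a section of $L_{-1,1}$ with respect to the connection $\D$ on $\acan$ induced by the Weyl connection $\weyl$ entering the pair $(\D,\Phi)$; hence $z'$ and $z''$ are the functions representing $\D'\mu$ and $\D''\mu$. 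Third, $a_3$ represents the cubic differential $A$, and since $\Phi\,\area=A$ with $\area$ represented by the constant function $1$ on $SM$, the same function $a_3$ represents $\Phi$, so $a_{-3}=\ov{a_3}$ represents $\ov{\Phi}$. Substituting these three identifications into \eqref{eq:beltramipdesm} and reading the result as an identity of sections over $M$ gives $\D''\mu-\mu\,\D'\mu=\Phi\mu^3+\ov{\Phi}$, which is \eqref{eq:pdebeltramicoeff}, and the proposition follows from \ref{prop:uniquerep} together with \cite[Theorem 3]{MR3144212}.

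The only point demanding genuine care -- hence the step I would single out -- is the bookkeeping of $\mathrm{SO}(2)$-equivariance weights. One has to verify that under the action induced by the M\"obius action \eqref{eq:moeb} the function $z$ transforms with exactly the weight of a section of $\can^{-2}$ (equivalently of $L_{-1,1}$ after the identification $\acan\simeq\ov{\can}$), so that \eqref{eq:equationfordz} may be matched term by term with \eqref{eq:struceqsecmell}, and that each monomial appearing in \eqref{eq:beltramipdesm} -- notably $z^3a_3$ and $a_{-3}$ -- carries precisely the weight required to represent a well-defined section of the target bundle over $M$. These consistency checks are automatic once one notes that $Z$ does carry a globally defined complex structure, so that $\omega_\Upsilon$ and $\zeta_\Upsilon$ are well defined up to scale; granting this, the passage from \eqref{eq:beltramipdesm} to \eqref{eq:pdebeltramicoeff} is immediate.
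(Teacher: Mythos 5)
Your proposal is correct and follows essentially the same route as the paper: the proof there is precisely the assembly of \cite[Theorem~3]{MR3144212} with Lemma~\ref{lem:holcurvlemma}, the pullback computations \eqref{eq:newcplxstruc} and \eqref{eq:bzeta} yielding \eqref{eq:beltramipdesm}, and the identifications $z=\mu$ (comparing with \eqref{eq:omegaj}), $z',z''\leftrightarrow\D'\mu,\D''\mu$ via \eqref{eq:equationfordz}, and $a_3\leftrightarrow\Phi$. Your equivariance-weight check is exactly the bookkeeping the paper relies on implicitly, so nothing is missing.
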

\begin{Remark}
In the special case where $\mathfrak{p}$ is a properly convex projective structure, an equation equivalent to~\eqref{eq:pdebeltramicoeff} was previously obtained by N.~Hitchin using the Higgs bundle description of $\mathfrak{p}$.\footnote{Private communication, August 2014.}
\end{Remark}
As a corollary, we obtain:
\begin{Corollary}\label{cor:georig}
Let $M$ be a closed oriented surface with $\chi(M)<0$. Suppose the Weyl connections $\weyl$ and $\hatweyl$ on $TM$ are projectively equivalent. Then $\weyl=\hatweyl$ and they preserve the same conformal structure.  
\end{Corollary}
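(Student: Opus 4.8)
The plan is to argue via the twistor space $Z$ rather than to attack the non-linear PDE \eqref{eq:pdebeltramicoeff} directly. Write $[g]$ for the conformal structure preserved by $\weyl$ and $[\hat g]$ for the one preserved by $\hatweyl$. Since $\weyl$ and $\hatweyl$ are projectively equivalent they span one and the same projective structure $\mathfrak{p}$, hence one and the same complex surface $\rho\colon Z\to M$. Because $\mathfrak{p}$ contains the Weyl connection $\weyl$ for $[g]$, the section $s\colon M\to Z$ corresponding to $[g]$ is a holomorphic curve by \cite[Theorem~3]{MR3144212}; likewise the section $\hat s\colon M\to Z$ corresponding to $[\hat g]$ is a holomorphic curve. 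Thus $Z$ carries two compact connected holomorphic curves, both of which are sections of $\rho$, and the heart of the matter is to show $s=\hat s$.

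The first observation is that any two sections of $\rho\colon Z\to M$ are homologous: $Z$ deformation retracts onto its zero section by radial contraction of the disk fibres, so $\rho_*\colon H_2(Z;\Z)\to H_2(M;\Z)$ is an isomorphism sending the fundamental class of any section to $[M]$; hence $[s(M)]=[\hat s(M)]$ and the intersection number obeys $s(M)\cdot\hat s(M)=s(M)\cdot s(M)$. The second observation is a computation of this self-intersection: it equals the degree of the normal bundle of $s(M)$ in $Z$, which -- $s$ being a section -- is $s^*$ of the vertical tangent bundle of $Z\to M$. As noted before \eqref{eq:equationfordz}, the fibre coordinate $z$ of $Z$ transforms as a section of $\can^{-2}$, so the vertical bundle is $\can^{-2}$ and $s(M)\cdot s(M)=\deg\can^{-2}=-2\deg\can=2\chi(M)$, which is negative precisely because $\chi(M)<0$.

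Now I would invoke positivity of intersections in complex geometry: two distinct irreducible compact complex curves in a complex surface have non-negative intersection number, each intersection point contributing a positive local multiplicity. Since $s(M)$ and $\hat s(M)$ have intersection number $2\chi(M)<0$, they cannot be distinct, so $s=\hat s$ and therefore $[g]=[\hat g]$: the two Weyl connections preserve the same conformal structure. Finally, regarding $\weyl$ merely as a torsion-free connection $\nabla$ on $TM$, both $\weyl$ and $\hatweyl$ are Weyl connections for the common conformal structure $[g]$ that are projectively equivalent to $\nabla$ (with associated cubic differential $\Phi=0$ in either case); by the uniqueness asserted in Proposition~\ref{prop:uniquerep} we conclude $\weyl=\hatweyl$.

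The step I expect to carry the real weight is the intersection-theoretic dichotomy of the third paragraph: it is exactly the sign of $\deg\can^{-2}=2\chi(M)$ that makes the hypothesis $\chi(M)<0$ bite, trading the analytic difficulty of uniqueness for \eqref{eq:pdebeltramicoeff} against a clean positivity statement. For comparison, the purely PDE-theoretic route would specialise Proposition~\ref{ppn:mainpdecplx} to $\Phi=0$, so that the Beltrami differential $\mu$ relating $[g]$ to $[\hat g]$ satisfies $\weyl^{\prime\prime}\mu=\mu\,\weyl^{\prime}\mu$ with $|\mu|<1$, and would show by hand that such a $\mu$ must vanish when $\chi(M)<0$ (cf.\ \cite{MR3384876}); the nonlinear term $\mu\,\weyl^{\prime}\mu$ is the obstacle there, and the twistor argument is attractive precisely because it sidesteps it.
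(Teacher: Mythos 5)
Your argument is correct, but it is not the route the paper takes. The paper specialises the Beltrami equation \eqref{eq:pdebeltramicoeff} to $\Phi=0$ and observes that $\ov{\partial}_{\mu}:=\D^{\prime\prime}-\mu\,\D^{\prime}$ is a del-bar operator on $\ov{\can}\otimes\acan$, so the equation reads $\ov{\partial}_{\mu}\mu=0$; since this line bundle has degree $2\chi(M)<0$, its only holomorphic section is zero, whence $\mu=0$ and $[g]=[\hat g]$ -- the nonlinear term you worried about is absorbed into the holomorphic structure rather than estimated. Your twistor argument replaces this with intersection theory in $Z$: both conformal structures are compact holomorphic sections, homologous because the disk fibres are contractible, with self-intersection equal to the Euler number of the vertical bundle $\can^{-2}$, namely $2\chi(M)<0$, so positivity of intersections of distinct compact irreducible curves forces $s=\hat s$. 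All the ingredients check out: the vertical bundle along a section is indeed the associated bundle of weight $\ee^{-2\i\phi}$, i.e.\ $\can^{-2}$, and \eqref{eq:pullbackzetaupsilon13} shows the fibres are $J$-complex with $\d z$ a $(1,\!0)$-form up to nonzero factor, so the orientations in your Euler-number and positivity arguments agree; the intersection number of two compact cycles in the noncompact $4$-manifold $Z$ is homology-invariant; and the final step ($[g]=[\hat g]$ plus projective equivalence implies $\weyl=\hatweyl$ via the uniqueness in Proposition~\ref{prop:uniquerep}) is exactly the paper's conclusion. The two proofs hinge on the same number $\deg\can^{-2}=2\chi(M)$; the paper's is shorter and needs only the vanishing of holomorphic sections of a negative line bundle, while yours imports standard but heavier global facts (self-intersection equals normal-bundle Euler number, positivity of intersections) in exchange for a more geometric picture -- for instance it also explains the complex $5$-manifold of solutions on $S^2$, where the normal bundle has degree $4$.
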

\begin{proof}
Equip $M$ with the Riemann surface structure defined by $[g]$ and the orientation. Let $\mathfrak{p}$ be the projective structure defined by $\weyl$ (or $\hatweyl$). The projective structure $\mathfrak{p}$ is encoded in terms of the pair $(\weyl,0)$. Moreover, the Beltrami differential $\mu$ defined by $[\hat{g}]$ solves~\eqref{eq:pdebeltramicoeff}, that is,
$$
\D^{\prime\prime}\mu-\mu\D^{\prime}\mu=0. 
$$
Now observe that $\ov{\partial}_{\mu}=\D^{\prime\prime}-\mu\D^{\prime}$ defines a del-bar operator on $\ov{K}\otimes K^{*}$ and hence~~\eqref{eq:pdebeltramicoeff} can be written as $\ov{\partial}_{\mu}\mu=0$. Therefore, $\mu$ is holomorphic with respect to the holomorphic line bundle structure defined by $\ov{\partial}_{\mu}$ on $\ov{K}\otimes K^{*}$. However, since $\chi(M)<0$, the line bundle $\ov{K}\otimes K^{*}$ has negative degree, so that its only holomorphic section is the zero-section. It follows that $\mu=0$ and hence $[g]=[\hat{g}]$. Since $\weyl$ and $\hatweyl$ are projectively equivalent and preserve the same conformal structure $[g]$, we conclude exactly as in the proof of Proposition~\ref{prop:uniquerep} that $\weyl=\hatweyl$. 
\end{proof}
\begin{Remark}
The above corollary was first proved in~\cite{MR3384876}. In particular, as a special case, it also shows that on a closed surface with $\chi(M)<0$, the unparametrised geodesics of a Riemannian metric determine the metric up to rescaling by a positive constant. This was first observed in~\cite{MR1796527}.  
\end{Remark}

\section{The transport equation}

While the PDE~\eqref{eq:pdebeltramicoeff} for the Beltrami differential $\mu$ is natural from a complex geometry point of view, it turns out to be advantageous to rephrase it as a transport equation on $SM$. The relevant transport equation on $SM$ can be derived using~\eqref{eq:pdebeltramicoeff} -- see Appendix~\ref{app:1} --  but here we will instead take a different approach, as it leads to a more general result about thermostats having the same unparametrised geodesics, see Proposition~\ref{prop:transport}.

Let $g,\hat{g}$ be Riemannian metrics on $M$. In what follows all objects defined in terms of the metric $\hat{g}$ will be decorated with a hat symbol. There is an obvious scaling map 
$$
\ell : SM \to \widehat{SM}, \quad (x,v) \mapsto \left(x,\frac{v}{\sqrt{\hat{g}(v,v)}}\right)
$$
which is a fibre-bundle isomorphism covering the identity on $M$. As before we define
$$
p(x,v)=\hat{g}(v,v), \qquad r(x,v)=\hat{g}(v,Jv), \quad\text{and}\quad q(x,v)=\hat{g}(Jv,Jv). 
$$ 
\begin{Lemma}\label{lem:pullbackvolume}
The pullback of the volume form $\hat{\Theta}$ on $\widehat{SM}$ is
$$
\ell^*\hat{\Theta}=\left(\frac{pq-r^2}{p}\right)\Theta. 
$$
\end{Lemma}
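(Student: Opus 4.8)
The plan is to use the factorisation of the Liouville form into a base area form and a fibre angle form, and then to observe that only one scalar coefficient really matters. From the explicit expressions~\eqref{eq:omegaexplicit} one sees that $\omega_1\wedge\omega_2=\pi^*\area$ is the pullback of the area form of $g$, and likewise $\hat\omega_1\wedge\hat\omega_2=\hat\pi^*\hatarea$, so that $\Theta=\pi^*\area\wedge\psi$ and $\hat\Theta=\hat\pi^*\hatarea\wedge\hat\psi$. Since $\ell$ covers the identity on $M$, i.e.\ $\hat\pi\circ\ell=\pi$, pulling back gives $\ell^*\hat\Theta=\pi^*\hatarea\wedge\ell^*\hat\psi$. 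Writing $\ell^*\hat\psi=c_1\omega_1+c_2\omega_2+c_3\psi$ in the coframe $(\omega_1,\omega_2,\psi)$ and using that $\pi^*\hatarea$ is a (base) function times $\omega_1\wedge\omega_2$, the terms $c_1\omega_1,c_2\omega_2$ drop out of the wedge. So the lemma reduces to two elementary computations: the ratio $\pi^*\hatarea/\pi^*\area$ of area forms, and the single coefficient $c_3$. The horizontal coefficients $c_1,c_2$, which would encode the difference of the Levi-Civita connections of $g$ and $\hat g$, never enter.

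For the area forms: fix $(x,v)\in SM$; then $(v,Jv)$ is a positively oriented $g$-orthonormal frame of $T_xM$ in which $\hat g$ has Gram matrix $\left(\begin{smallmatrix}p&r\\ r&q\end{smallmatrix}\right)$, so $\hatarea_x=\sqrt{pq-r^2}\,\area_x$; replacing $v$ by a $g$-rotate (which is $g$-orthogonal) shows $pq-r^2$ is independent of $v$, hence $\pi^*\hatarea=\sqrt{pq-r^2}\,\omega_1\wedge\omega_2$ on $SM$. For $c_3$: since $\psi$ is dual to the vertical field $V$, we have $c_3=(\ell^*\hat\psi)(V)=\hat\psi(d\ell(V))$, and $d\ell(V)$ is vertical in $\widehat{SM}$ because $\ell$ covers the identity. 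By the description~\eqref{eq:defpsi} of $\hat\psi$, evaluating it on the vertical curve $\phi\mapsto\bigl(x,w(\phi)\bigr)$ with $w(\phi)=R_\phi v/\sqrt{\hat g(R_\phi v,R_\phi v)}$ gives $c_3=\hat g\bigl(\tfrac{d}{d\phi}\big|_0 w(\phi),\,\hat J w(0)\bigr)$. Using $\tfrac{d}{d\phi}\big|_0 R_\phi v=Jv$ and $\tfrac{d}{d\phi}\big|_0\hat g(R_\phi v,R_\phi v)=2r$ one finds $\tfrac{d}{d\phi}\big|_0 w(\phi)=\tfrac{1}{\sqrt p}\,Jv-\tfrac{r}{p^{3/2}}\,v$, while the matrix form of $\hat J$ gives $\hat J w(0)=\tfrac{1}{\sqrt p\,\sqrt{pq-r^2}}\bigl(-r\,v+p\,Jv\bigr)$; pairing with $\hat g$ and simplifying yields $c_3=\sqrt{pq-r^2}/p$. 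Combining, $\ell^*\hat\Theta=\sqrt{pq-r^2}\cdot c_3\cdot\Theta=\tfrac{pq-r^2}{p}\,\Theta$.

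The only point that takes a little care is the evaluation of $\hat\psi$ on $d\ell(V)$, i.e.\ understanding how the radial rescaling distorts the fibre circles; but because only the vertical component $c_3$ is needed, this is a one-variable computation along a single fibre and the genuinely connection-dependent information is irrelevant. As an alternative I could compute $\ell^*\hat\omega_1$ and $\ell^*\hat\omega_2$ directly by expanding $d\pi(\cdot)$ in the frame $(v,Jv)$, obtaining $\ell^*\hat\omega_1=\sqrt p\,\omega_1+\tfrac{r}{\sqrt p}\,\omega_2$ and $\ell^*\hat\omega_2=\tfrac{\sqrt{pq-r^2}}{\sqrt p}\,\omega_2$, so that $\ell^*(\hat\omega_1\wedge\hat\omega_2)=\sqrt{pq-r^2}\,\omega_1\wedge\omega_2$, again reducing the whole computation to the single coefficient $c_3$.
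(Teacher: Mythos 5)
Your argument is correct, and it takes a genuinely different route from the paper's for the decisive step. The paper first computes the full pullbacks $\ell^*\hat\omega_1$ and $\ell^*\hat\omega_2$ (your ``alternative'' at the end coincides with~\eqref{eq:pullbackalpha} and~\eqref{eq:pullbackbeta}), and then extracts the $\psi$-coefficient of $\ell^*\hat\psi$ indirectly, by pulling back the structure equation $\d\hat\omega_2=-\hat\psi\wedge\hat\omega_1$ and using the vertical derivative identities $V\sqrt{p}=r/\sqrt{p}$ and $V\bigl(\sqrt{pq-r^2}/\sqrt{p}\bigr)=-r\sqrt{pq-r^2}/p^{3/2}$, arriving at~\eqref{eq:pullbackpsi}. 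You instead isolate from the start the only coefficient that can survive the wedge with $\pi^*\hatarea$, namely $c_3=\hat\psi(d\ell(V))$, and compute it pointwise from the geometric description~\eqref{eq:defpsi}: since $\ell$ covers the identity, $d\ell(V)$ is vertical, the base curve is constant, so the covariant derivative reduces to the ordinary derivative in $T_xM$ and the Levi-Civita connection of $\hat g$ drops out; your one-variable computation of $c_3=\sqrt{pq-r^2}/p$, and your Gram-determinant argument for $\pi^*\hatarea=\sqrt{pq-r^2}\,\omega_1\wedge\omega_2$, are both correct. Your version is more elementary and makes transparent why no connection-dependent information enters; the paper's longer computation has the side benefit of producing the explicit formulas~\eqref{eq:pullbackalpha}--\eqref{eq:pullbackpsi}, which are reused after Proposition~\ref{prop:transport} (in the Remark computing $\ell^*\hat X$ and $\ell^*\hat V$, giving the alternative derivation of~\eqref{eq:keyid}), information your shortcut deliberately discards.
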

\begin{proof}
Since 
$$
d\pi\left(X(x,v)\right)=v \quad \text{and}\quad d\pi\left(H(x,v)\right)=Jv,
$$
we obtain
$$
\pi^*\hat{g}=p\omega_1\otimes\omega_1+2r\omega_1\circ\omega_2+q\omega_2\otimes \omega_2,
$$
where we write $\omega_1\circ\omega_2:=\frac{1}{2}\left(\omega_1\otimes\omega_2+\omega_2\otimes\omega_1\right)$. We first compute
\begin{align*}
\left(X\inc \ell^*\hat{\omega}_1\right)(x,v)&=\hat{\omega}_1\left(d\ell(X(x,v))\right)=\hat{g}\left(\ell(x,v),(d\hat{\pi}\circ d\ell)(X(x,v))\right)\\
&=\frac{1}{\sqrt{\hat{g}(v,v)}}\hat{g}(v,\d\pi(X(x,v)))=\sqrt{\hat{g}(v,v)}=\sqrt{p},
\end{align*}
where we have used that $\hat{\pi}\circ\ell=\pi$. Likewise, we obtain
\begin{align*}
\left(H\inc \ell^*\hat{\omega}_1\right)(x,v)&=\hat{\omega}_1\left(d\ell(H(x,v))\right)=\hat{g}\left(\ell(x,v),(d\hat{\pi}\circ d\ell)(H(x,v))\right)\\
&=\frac{1}{\sqrt{\hat{g}(v,v)}}\hat{g}(v,\d\pi(H(x,v)))=\frac{\hat{g}(v,Jv)}{\sqrt{\hat{g}(v,v)}}=\frac{r}{\sqrt{p}}.
\end{align*}
Since $\hat{\omega}_1$ is semibasic for the projection $\hat{\pi}$, the pullback $\ell^*\hat{\omega}_1$ is semibasic for the projection $\pi$, hence $V\inc \ell^*\hat{\omega}_1=0$, so that we have
\begin{equation}\label{eq:pullbackalpha}
\ell^*\hat{\omega}_1=\sqrt{p}\omega_1+\frac{r}{\sqrt{p}}\omega_2. 
\end{equation}
The pullback $\ell^*\hat{\omega}_2$ must be a multiple of $\omega_2$. Indeed, $\ell^*\hat{\omega}_2$ is $\pi$-semibasic and we obtain
\begin{align*}
\left(X\inc \ell^*\hat{\omega}_2\right)(x,v)&=\hat{\omega}_2\left(d\ell(X(x,v))\right)=\hat{g}\left(\hat{J}\ell(x,v),(d\hat{\pi}\circ d\ell)(X(x,v))\right)\\
&=\frac{1}{\sqrt{\hat{g}(v,v)}}\hat{g}(\hat{J}v,\d\pi(X(x,v)))=\frac{\hat{g}(\hat{J}v,v)}{\sqrt{\hat{g}(v,v)}}=0.
\end{align*}
Recall that the area form $\hatarea$ of $\hat{g}$ satisfies $\hat{\pi}^*\hatarea=\hat{\omega}_1\wedge\hat{\omega}_2$, hence
$$
\ell^*(\hat{\omega}_1\wedge\hat{\omega}_2)=\pi^*\hatarea=\sqrt{pq-r^2}\,\omega_1\wedge\omega_2. 
$$
Thus we must have
\begin{equation}\label{eq:pullbackbeta}
\ell^*\hat{\omega}_2=\frac{\sqrt{pq-r^2}}{\sqrt{p}}\omega_2. 
\end{equation}
Since the Lie derivative of $\pi^*\hat{g}$ with respect to $V$ vanishes identically, we compute that $V\sqrt{p}=r/\sqrt{p}$. Moreover, since $\sqrt{pq-r^2}$ is the $\pi$-pullback of a function on $M$, we obtain
$$
V\left(\frac{\sqrt{pq-r^2}}{\sqrt{p}}\right)=-\frac{r\sqrt{pq-r^2}}{p^{3/2}}.
$$
Pulling back the structure equation $d\hat{\omega}_2=-\hat{\psi}\wedge\hat{\omega}_1$ whilst using~\eqref{eq:pullbackalpha} and~\eqref{eq:pullbackbeta} gives
\begin{align*}
\ell^*(d\hat{\omega}_2)&=d(\ell^*\hat{\omega}_2)=d\left(\frac{\sqrt{pq-r^2}}{\sqrt{p}}\omega_2\right)\\
&=\left(\hat{a}\omega_1-\frac{r\sqrt{pq-r^2}}{p^{3/2}}\psi\right)\wedge\omega_2-\frac{\sqrt{pq-r^2}}{\sqrt{p}}\psi\wedge\omega_1\\
&=-\ell^*\hat{\psi}\wedge\ell^*\hat{\omega}_1=-\ell^*\hat{\psi}\wedge\left(\sqrt{p}\omega_1+\frac{r}{\sqrt{p}}\omega_2\right)
\end{align*}
for some unique real-valued function $\hat{a}$ on $SM$. Comparing the coefficients in the above equations, it follows that
\begin{equation}\label{eq:pullbackpsi}
\ell^*\hat{\psi}=a\omega_1+b\omega_2+\frac{\sqrt{pq-r^2}}{p}\psi
\end{equation}
for some unique real-valued functions $a,b$ on $SM$. In particular, we obtain
$$
\ell^*\hat{\Theta}=\ell^*\left(\hat{\omega}_1\wedge\hat{\omega}_2\wedge\hat{\psi}\right)=\left(\frac{pq-r^2}{p}\right)\omega_1\wedge\omega_2\wedge\psi, 
$$
as claimed. 
\end{proof}

We use this lemma to derive the following observation about general thermostats:

\begin{Proposition}\label{prop:transport} If two thermostats determined by pairs $(g,\lambda)$ and $(\hat{g},\hat{\lambda})$ have the same unparametrised geodesics, then
$$
\sqrt{p}\,(\hat{V}\hat{\lambda}\circ\ell)=F\log\left(\frac{pq-r^2}{p^{3/2}}\right)+V\lambda.
$$
\end{Proposition}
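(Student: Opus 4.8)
The conceptual input is that having the same unparametrised geodesics forces $\ell$ to carry the foliation of $SM$ by $F$-orbits onto the foliation of $\widehat{SM}$ by $\hat F$-orbits, where $\hat F=\hat X+\hat\lambda\hat V$. Indeed, an orbit $\dot\gamma=F(\gamma)$ projects to $M$ with $g$-unit velocity, so it is precisely the $g$-unit tangent lift of its projection; likewise an $\hat F$-orbit is the $\hat g$-unit tangent lift of its projection, and the scaling map $\ell$ intertwines these two lift operations, since radial rescaling sends the $g$-unit vector of an immersed curve to its $\hat g$-unit vector. As $\ell$ is a fibre-bundle isomorphism, $\d\ell$ maps the tangent line of each $F$-orbit to that of the corresponding $\hat F$-orbit, so $\d\ell(F)=\rho\,(\hat F\circ\ell)$ for a nowhere-vanishing $\rho\in C^\infty(SM)$. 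Pairing with $\hat\omega_1$ and using $\hat\omega_1(\hat F)=1$, $\omega_1(F)=1$, $\omega_2(F)=0$ together with $\ell^*\hat\omega_1=\sqrt p\,\omega_1+\tfrac r{\sqrt p}\omega_2$ from Lemma~\ref{lem:pullbackvolume} gives $\rho=(\ell^*\hat\omega_1)(F)=\sqrt p$. Since $\ell$ restricts on each fibre to an orientation-preserving diffeomorphism of circles it also sends $V$ to a positive multiple of $\hat V$, and pairing $\d\ell(V)$ with $\hat\psi$ (using $\hat\psi(\hat V)=1$, $\omega_i(V)=0$, $\psi(V)=1$ and $\ell^*\hat\psi=a\,\omega_1+b\,\omega_2+\tfrac{\sqrt{pq-r^2}}p\psi$ from the same lemma) yields $\d\ell(V)=\tfrac{\sqrt{pq-r^2}}p\,(\hat V\circ\ell)$.

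From $\d\ell(F)=\sqrt p\,(\hat F\circ\ell)$ and $\hat\psi(\hat F)=\hat\lambda$ we read off
$$
\hat\lambda\circ\ell=\tfrac1{\sqrt p}\,(\ell^*\hat\psi)(F)=\tfrac1{\sqrt p}\Bigl(a+\tfrac{\sqrt{pq-r^2}}p\,\lambda\Bigr),
$$
while $\d\ell(V)=\tfrac{\sqrt{pq-r^2}}p\,(\hat V\circ\ell)$ gives $(\hat V\hat u)\circ\ell=\tfrac p{\sqrt{pq-r^2}}\,V(\hat u\circ\ell)$ for every $\hat u\in C^\infty(\widehat{SM})$, in particular for $\hat u=\hat\lambda$. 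So the claim reduces to computing $V(\hat\lambda\circ\ell)$, and for that I need to know $a$ and $V(a/\sqrt p)$. To pin down $a$ I would pull back the remaining torsion-free structure equation $\d\hat\omega_1=\hat\psi\wedge\hat\omega_2$; the identity $\d\hat\omega_2=-\hat\psi\wedge\hat\omega_1$ was already exploited in the proof of Lemma~\ref{lem:pullbackvolume}, but it only fixed the $\psi$-coefficient of $\ell^*\hat\psi$. Comparing the $\omega_1\wedge\omega_2$-coefficients in $\d(\ell^*\hat\omega_1)=(\ell^*\hat\psi)\wedge(\ell^*\hat\omega_2)$, using the formulas of Lemma~\ref{lem:pullbackvolume}, one obtains
$$
a=\tfrac{\sqrt p}{\sqrt{pq-r^2}}\Bigl(X\bigl(\tfrac r{\sqrt p}\bigr)-H\sqrt p\Bigr).
$$

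The computational core is then $V(a/\sqrt p)$. Since $pq-r^2$ is the pullback of a function on $M$, $V$ annihilates $(pq-r^2)^{-1/2}$, so $V(a/\sqrt p)=\tfrac1{\sqrt{pq-r^2}}\bigl(VX(\tfrac r{\sqrt p})-VH\sqrt p\bigr)$; commuting $V$ past $X$ and $H$ via $[V,X]=H$, $[V,H]=-X$, and using $V\sqrt p=\tfrac r{\sqrt p}$, $Vr=q-p$, $Vp=2r$ (all consequences of $\mathcal L_V\pi^*\hat g=0$, as in the proof of Lemma~\ref{lem:pullbackvolume}), one finds that the two horizontal derivatives cancel and that
$$
V\bigl(\tfrac a{\sqrt p}\bigr)=\tfrac1{\sqrt{pq-r^2}}\,X\Bigl(\tfrac{pq-r^2}{p^{3/2}}\Bigr)=\tfrac{\sqrt{pq-r^2}}{p^{3/2}}\,X\log\Bigl(\tfrac{pq-r^2}{p^{3/2}}\Bigr).
$$
Differentiating $\hat\lambda\circ\ell=\tfrac a{\sqrt p}+\tfrac{\sqrt{pq-r^2}}{p^{3/2}}\lambda$, using this identity and $V\bigl(\tfrac{\sqrt{pq-r^2}}{p^{3/2}}\bigr)=\tfrac{\sqrt{pq-r^2}}{p^{3/2}}\,V\log\bigl(\tfrac{pq-r^2}{p^{3/2}}\bigr)=-\tfrac{3r}{p}\cdot\tfrac{\sqrt{pq-r^2}}{p^{3/2}}$ (again because $V(pq-r^2)=0$), the right-hand side collapses, since $F=X+\lambda V$, to
$$
V(\hat\lambda\circ\ell)=\tfrac{\sqrt{pq-r^2}}{p^{3/2}}\Bigl(F\log\bigl(\tfrac{pq-r^2}{p^{3/2}}\bigr)+V\lambda\Bigr).
$$
Finally $(\hat V\hat\lambda)\circ\ell=\tfrac p{\sqrt{pq-r^2}}\,V(\hat\lambda\circ\ell)=\tfrac1{\sqrt p}\bigl(F\log(\tfrac{pq-r^2}{p^{3/2}})+V\lambda\bigr)$, and multiplying by $\sqrt p$ gives the asserted formula.

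The only genuinely laborious part is the determination of $a$ and, above all, the simplification of $V(a/\sqrt p)$: that is where the cancellation of the horizontal derivatives occurs and where the sign conventions in the commutation relations and in the $V$-derivatives of $p,q,r$ must be tracked carefully. Everything else is a routine unwinding of the single geometric fact that $\ell$ matches the two orbit foliations.
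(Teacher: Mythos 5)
Your argument is correct, but it takes a genuinely different route from the paper's in its main step. You share with the paper the key identity $d\ell(F)=\sqrt{p}\,(\hat F\circ\ell)$ (the paper's \eqref{eq:keyid}, there obtained by differentiating the time change and applying $d\hat\pi$, here by matching unit-tangent lifts and pairing with $\ell^*\hat\omega_1$ -- the same content), but from that point on the paper argues with volume forms: it takes the divergence of $\sqrt{p}\,\ell^*\hat F$ with respect to $\ell^*\hat\Theta=\frac{pq-r^2}{p}\Theta$ (Lemma~\ref{lem:pullbackvolume}) and uses the elementary identities of Lemma~\ref{lem:basicid} together with $\Div_\Theta F=V\lambda$ and $\Div_{\hat\Theta}\hat F=\hat V\hat\lambda$, so the undetermined coefficients $a,b$ in \eqref{eq:pullbackpsi} never need to be computed. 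You instead work entirely in the coframe: you pin down $a$ by pulling back the remaining structure equation $d\hat\omega_1=\hat\psi\wedge\hat\omega_2$, write $\hat\lambda\circ\ell=\frac{1}{\sqrt p}(\ell^*\hat\psi)(F)=\frac{a}{\sqrt p}+\frac{\sqrt{pq-r^2}}{p^{3/2}}\lambda$, and then compute $V(\hat\lambda\circ\ell)$ using $[V,X]=H$, $[V,H]=-X$ and $Vp=2r$, $Vr=q-p$, $Vq=-2r$; I checked the asserted cancellation of the horizontal derivatives, the identity $V\bigl(\frac{a}{\sqrt p}\bigr)=\frac{1}{\sqrt{pq-r^2}}X\bigl(\frac{pq-r^2}{p^{3/2}}\bigr)$, and the final assembly via $d\ell(V)=\frac{\sqrt{pq-r^2}}{p}(\hat V\circ\ell)$ (which is the formula for $\ell^*\hat V$ in the paper's closing Remark of this section), and they are all right. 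What the paper's divergence argument buys is brevity and the automatic disappearance of $a$, $b$ and of all commutator bookkeeping; what your computation buys is an explicit, self-contained verification that needs nothing beyond the pullback formulas \eqref{eq:pullbackalpha}--\eqref{eq:pullbackpsi} from the proof of Lemma~\ref{lem:pullbackvolume}, bypassing Lemma~\ref{lem:basicid} and the divergence interpretation of $V\lambda$ and $\hat V\hat\lambda$ altogether.
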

As an immediate application we obtain the following classical fact:
\begin{Corollary}
Let $g$ and $\hat{g}$ be two Riemannian metrics on $M$ having the same unparametrised geodesics, then $p/(pq-r^2)^{2/3}$ is an integral for the geodesic flow of $g$. 
\end{Corollary}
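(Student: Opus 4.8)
The plan is to read off the statement as an immediate specialisation of Proposition~\ref{prop:transport}. The geodesic flow of a Riemannian metric is generated by the vector field $X$ alone, so it is precisely the thermostat $F=X+\lambda V$ with $\lambda\equiv 0$; similarly the geodesic flow of $\hat g$ is the thermostat determined by $(\hat g,\hat\lambda)$ with $\hat\lambda\equiv 0$. Since by hypothesis the geodesics of $g$ and $\hat g$ coincide as unparametrised curves, these two thermostats have the same unparametrised geodesics, so Proposition~\ref{prop:transport} applies. With $\lambda\equiv 0$ we get $V\lambda=0$ and $F=X$, and with $\hat\lambda\equiv 0$ we get $\hat V\hat\lambda\circ\ell=0$, so the transport identity collapses to
\begin{equation*}
0=X\log\left(\frac{pq-r^2}{p^{3/2}}\right).
\end{equation*}

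The rest is bookkeeping. First, $\hat g$ being a Riemannian metric forces $p=\hat g(v,v)>0$ and $pq-r^2>0$ on all of $SM$ (it is the determinant of $\hat g$ in the $g$-orthonormal frame $v,Jv$), so $(pq-r^2)/p^{3/2}$ is a smooth, strictly positive function, and the displayed identity says that its logarithm — hence the function itself — is constant along the geodesic flow of $g$. Second, raising a positive $X$-invariant function to the power $-2/3$ preserves $X$-invariance, and
\begin{equation*}
\left(\frac{pq-r^2}{p^{3/2}}\right)^{-2/3}=\frac{p}{(pq-r^2)^{2/3}},
\end{equation*}
which is the asserted first integral.

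I do not expect a genuine obstacle here: the analytic content is already packaged in Proposition~\ref{prop:transport} (and, through it, in the volume computation of Lemma~\ref{lem:pullbackvolume}). The only points that deserve a word of care are the positivity of $pq-r^2$, which legitimises the fractional power, and the elementary but essential remark that geodesic flows are exactly the thermostats with vanishing vertical part, so that Proposition~\ref{prop:transport} is applicable with $\lambda=\hat\lambda=0$.
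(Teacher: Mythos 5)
Your proposal is correct and matches the paper's own argument: both specialise Proposition~\ref{prop:transport} to $\lambda=\hat{\lambda}=0$ to obtain $0=X\log\bigl((pq-r^2)/p^{3/2}\bigr)$ and then rewrite this as the vanishing of $X$ applied to $p/(pq-r^2)^{2/3}$. Your added remarks on positivity of $p$ and $pq-r^2$ are fine but not essential; nothing further is needed.
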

\begin{proof}
This special case corresponds to $\lambda=\hat{\lambda}=0$ and hence Proposition~\ref{prop:transport} implies
\begin{align*}
0&=X\log\left(\frac{pq-r^2}{p^{3/2}}\right)=-\frac{3}{2}X\log\left(\frac{p}{(pq-r^2)^{2/3}}\right)\\
&=-\frac{3}{2}\frac{(pq-r^2)^{2/3}}{p}X\left(\frac{p}{(pq-r^2)^{2/3}}\right). 
\end{align*}
\end{proof}

In order to prove Proposition~\ref{prop:transport} we also recall a general lemma whose proof is elementary and thus omitted. 

\begin{Lemma}\label{lem:basicid} Let $X$ be a vector field on a manifold $M$ with volume form $\Omega$. Let $f$ and $s>0$ be smooth functions.
Then
$$
\Div_{\Omega}(fX)=Xf+f\Div_{\Omega}X \quad \text{and}\quad 
\Div_{s\,\Omega}(X)=X\log s+\Div_{\Omega}X.
$$
\end{Lemma}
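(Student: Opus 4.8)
\textbf{Proof plan for Lemma~\ref{lem:basicid}.}
The plan is to work invariantly from the definition of divergence through the Lie derivative: for a nowhere-vanishing top-degree form $\Omega$ on $M$ and a vector field $Y$, let $\Div_{\Omega}Y$ be the unique function with $\mathcal{L}_Y\Omega=(\Div_{\Omega}Y)\,\Omega$. Both identities will then come out of Cartan's formula $\mathcal{L}_Y\Omega=d(\iota_Y\Omega)+\iota_Y(d\Omega)$ and the Leibniz rule, using throughout that $d\Omega=0$ since $\Omega$ has top degree.

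For the first identity I would compute $\mathcal{L}_{fX}\Omega=d(\iota_{fX}\Omega)=d(f\,\iota_X\Omega)=df\wedge\iota_X\Omega+f\,d(\iota_X\Omega)$. The one algebraic point to record is the pointwise identity $df\wedge\iota_X\Omega=(Xf)\,\Omega$: apply the antiderivation $\iota_X$ to the identically vanishing $(n{+}1)$-form $df\wedge\Omega$ on the $n$-manifold $M$, obtaining $0=(Xf)\,\Omega-df\wedge\iota_X\Omega$. Combining this with $d(\iota_X\Omega)=\mathcal{L}_X\Omega=(\Div_{\Omega}X)\,\Omega$ gives $\mathcal{L}_{fX}\Omega=(Xf+f\Div_{\Omega}X)\,\Omega$, which is exactly $\Div_{\Omega}(fX)=Xf+f\Div_{\Omega}X$.

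For the second identity I would simply expand, using the Leibniz rule for the Lie derivative and $\mathcal{L}_X s=Xs$ on functions, $\mathcal{L}_X(s\Omega)=(Xs)\,\Omega+s\,\mathcal{L}_X\Omega=(Xs)\,\Omega+s(\Div_{\Omega}X)\,\Omega$, and then divide by $s\Omega$, which is legitimate because $s>0$, to get $\Div_{s\Omega}X=(Xs)/s+\Div_{\Omega}X=X\log s+\Div_{\Omega}X$.

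There is no genuine obstacle here: the only substantive ingredient is the identity $df\wedge\iota_X\Omega=(Xf)\,\Omega$, and everything else is the Leibniz rule for $d$, for $\iota_X$, and for $\mathcal{L}_X$. (Alternatively, in a chart write $\Omega=\rho\,dx^1\wedge\cdots\wedge dx^n$, use $\Div_{\Omega}Y=\rho^{-1}\partial_i(\rho Y^i)$, and both formulas follow from the ordinary product rule; but the chart-free argument above is cleaner.) This also explains why the statement was flagged as elementary.
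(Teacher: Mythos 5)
Your proof is correct and complete. The paper omits the proof of this lemma as elementary, so there is nothing to compare against; your argument via the Lie-derivative definition of divergence, Cartan's formula with $d\Omega=0$, and the contraction identity $df\wedge\iota_X\Omega=(Xf)\,\Omega$ is exactly the standard verification the authors presumably had in mind, and every step checks out.
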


\begin{proof}[Proof of Proposition~\ref{prop:transport}] This follows from Lemma~\ref{lem:pullbackvolume} and~\ref{lem:basicid} and the key fact that if the thermostats have the same unparametrised
geodesics then
\begin{equation}
\label{eq:keyid}
\ell^*\hat{F}=\frac{1}{\sqrt{p}}\,F.
\end{equation}
To see the last equality, note that we can rephrase the hypothesis as follows. There is a smooth function
$\tau:SM\times\re\to\re$ implementing the time change so that
\[\ell\circ \phi_{\tau(x,v,t)}(x,v)=\hat{\phi}_{t}\circ\ell(x,v).\]
Differentiating this with respect to $t$ and setting $t=0$ gives
\[d\ell(fF)=\hat{F}\circ\ell,\]
where $f(x,v):=\frac{d}{dt}\tau(x,v,t)|_{t=0}$. To check that $f$ has the desired form, apply $d\hat{\pi}$ to the last equation
to get $f\,v=v/\sqrt{\hat{g}(v,v)}$. 

Writing $s:=(pq-r^2)/p$ and taking the divergence of~\eqref{eq:keyid} with respect to $\ell^*\hat{\Theta}=s\Theta$ gives
\begin{align*}
\Div_{s\,\Theta}\left(\sqrt{p}\,\ell^*\hat{F}\right)&=(\ell^*\hat{F})\sqrt{p}+\sqrt{p}\,\Div_{s\,\Theta}(\ell^*\hat{F})\\
&=\left(\frac{1}{\sqrt{p}}\right)F\sqrt{p}+\sqrt{p}\,\Div_{\ell^*\hat{\Theta}}\left(\ell^*\hat{F}\right)\\
&=F\left(\log\sqrt{p}\right)+\sqrt{p}\left(\Div_{\hat{\Theta}}\hat{F}\right)\circ \ell\\
&=\Div_{s\,\Theta}F=F\log s+\Div_{\Theta}F
\end{align*}
where we have used Lemma~\ref{lem:basicid}. Since $\Div_{\Theta}F=V\lambda$ and $\Div_{\hat{\Theta}}\hat{F}=\hat{V}\hat{\lambda}$ this last equation is equivalent to
$$
\sqrt{p}\left(\hat{V}\hat{\lambda}\circ \ell\right)=F\log\left(\frac{s}{\sqrt{p}}\right)+V\lambda,  
$$
which proves the claim.
\end{proof}
\begin{Remark}
Note that the crucial identity~\eqref{eq:keyid} also follows from a different argument. Since the orbits of $F$ and $\hat{F}$ project onto the same unparametrised curves, there must exist a smooth function $w$ on $SM$, so that $\ell^*\hat{F}=wF$. From~\eqref{eq:pullbackalpha},~\eqref{eq:pullbackbeta} and~\eqref{eq:pullbackpsi}, we compute that
$$
\ell^*\hat{X}=\frac{1}{\sqrt{p}}X-\frac{a\sqrt{p}}{\sqrt{pq-r^2}}V\quad \text{and}\quad \ell^*\hat{V}=\frac{p}{\sqrt{pq-r^2}}V
$$
from which one immediately obtains $w=1/\sqrt{p}$. 
\end{Remark}
A special case of Proposition~\ref{prop:transport} is the following:
\begin{Corollary}\label{cor:transportweyl}
Suppose the projective thermostat associated to the pair $(g,\lambda)=(g,a-V\theta)$ has the same unparametrised geodesics as the Weyl connection $\D$ defined by $(\hat{g},\alpha)$, then 
$$
u=\frac{3}{2}\log\left(\frac{p}{(pq-r^2)^{2/3}}\right)
$$
satisfies the transport equation
\begin{equation}\label{eq:transportpde}
Fu=Va+\beta,
\end{equation}
where $\beta=\theta-\alpha$. 
\end{Corollary}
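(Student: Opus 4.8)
The plan is to deduce the claim directly from Proposition~\ref{prop:transport}, applied to the thermostat pair $(g,\lambda)=(g,a-V\theta)$ and to the thermostat on $\widehat{SM}$ that represents the Weyl connection $\weyl$. First I would pin down that second thermostat. Since $\weyl={}^{\hat g}\nabla+\hat g\otimes\alpha^{\sharp}-\mathrm{Sym}(\alpha)$ has vanishing cubic differential, specialising~\eqref{eq:twistedconformalcon} to $a=0$, $\theta=\alpha$ and the metric $\hat g$ shows that the connection form of $\weyl$ on $\widehat{SM}$ satisfies $\hat\varphi^2_1=\hat\psi+\hat V(\alpha)\,\hat\omega_1-\alpha\,\hat\omega_2$. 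Hence $\hat F\inc\hat\varphi^2_1=0$ for $\hat F=\hat X+\hat\lambda\hat V$ precisely when $\hat\lambda=-\hat V\alpha$, so the thermostat of $\weyl$ is the pair $(\hat g,-\hat V\alpha)$ and its orbit projections to $M$ are exactly the geodesics of $\weyl$. Thus the hypothesis of the corollary says precisely that the thermostats $(g,a-V\theta)$ and $(\hat g,-\hat V\alpha)$ have the same unparametrised geodesics, and Proposition~\ref{prop:transport} applies, giving
\[
\sqrt{p}\,\bigl(\hat V\hat\lambda\circ\ell\bigr)=F\log\left(\frac{pq-r^2}{p^{3/2}}\right)+V\lambda .
\]

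Next I would evaluate both sides. On the left, $\hat\lambda=-\hat V\alpha$, and since a $1$-form on $M$ is represented on $\widehat{SM}$ by a function lying in $\hat{\mathcal H}_{-1}\oplus\hat{\mathcal H}_1$ one has $\hat V\hat V\alpha=-\alpha$, hence $\hat V\hat\lambda=-\hat V\hat V\alpha=\alpha$. Composing with the scaling map $\ell\colon(x,v)\mapsto(x,v/\sqrt{p})$, which on the $\pi$-pullback of a $1$-form on $M$ amounts to multiplication by $1/\sqrt{p}$, yields $\sqrt{p}\,(\hat V\hat\lambda\circ\ell)=\alpha$, now read as a function on $SM$. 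On the right, $V\lambda=V(a-V\theta)=Va-VV\theta=Va+\theta$, using $VV\theta=-\theta$ for the $1$-form $\theta$. Substituting gives $\alpha=F\log\!\bigl((pq-r^2)/p^{3/2}\bigr)+Va+\theta$, i.e.
\[
-F\log\left(\frac{pq-r^2}{p^{3/2}}\right)=Va+(\theta-\alpha)=Va+\beta .
\]
Since $-\log\!\bigl((pq-r^2)/p^{3/2}\bigr)=\tfrac{3}{2}\log\!\bigl(p/(pq-r^2)^{2/3}\bigr)=u$, this is exactly~\eqref{eq:transportpde}.

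I do not expect a serious obstacle: the corollary is essentially a bookkeeping consequence of Proposition~\ref{prop:transport}. The two points that need genuine care are (a) correctly reading off the function $\hat\lambda=-\hat V\alpha$ attached to the Weyl connection, together with the verification that its orbit projections are the geodesics of $\weyl$ so that the hypothesis of Proposition~\ref{prop:transport} is met verbatim; and (b) tracking the factors of $\sqrt{p}$ that arise because the non-canonical identification $\ell$ of $SM$ with $\widehat{SM}$ makes a fixed $1$-form on $M$ correspond to different functions upstairs and downstairs.
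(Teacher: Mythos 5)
Your proposal is correct and follows essentially the same route as the paper: apply Proposition~\ref{prop:transport} with $\lambda=a-V\theta$ and $\hat\lambda=-\hat V\alpha$, use $\hat V\hat V\alpha=-\alpha$, $VV\theta=-\theta$ and the factor $\sqrt{p}$ from the scaling map to identify both sides, and rearrange. Your preliminary check via~\eqref{eq:twistedconformalcon} that the Weyl connection corresponds to the thermostat $(\hat g,-\hat V\alpha)$ merely makes explicit a step the paper leaves implicit.
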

\begin{proof}
Applying Proposition~\ref{prop:transport} in the special case $\lambda=a-V\theta$ and $\hat{\lambda}=-\hat{V}\alpha$ gives
$$
-\sqrt{p}\left(\hat{V}\hat{V}\alpha\circ\ell\right)=\sqrt{p}\left(\alpha\circ\ell\right)=F\log\left(\frac{pq-r^2}{p^{3/2}}\right)+V(a-V\theta),
$$
the left hand side of which is simply $\alpha$, thought of as a function on $SM$. Hence we obtain
\begin{align*}
-\left(Va+\theta-\alpha\right)&=F\log\left(\frac{pq-r^2}{p^{3/2}}\right)=F\left(-\frac{3}{2}\left(\log p-\frac{2}{3}\log(pq-r^2)\right)\right)\\
&=-Fu, 
\end{align*}
as claimed. 
\end{proof}

\section{The tensor tomography result}

In this final section we prove a vanishing theorem for the transport equation $Fu=Va+\beta$, provided the triple $(g,A,\theta)$ defining $F$ satisfies certain conditions. Recall that every properly convex projective structure $\mathfrak{p}$ arises from a triple $(g,A,0)$ satisfying 
$$
K_g=-1+2|A|^2_g \quad \text{and} \quad \ov{\partial} A=0. 
$$
In particular, we would like to conclude that if such a $\mathfrak{p}$ contains a Weyl connection, then $A$ must vanish identically and hence $\mathfrak{p}$ is hyperbolic. It turns out that one can prove a more general vanishing theorem for a class of thermostats arising from a triple $(g,A,\theta)$ where $A$ is a differential of degree $m\geqslant 3$ on $M$, that is, a section of $\can^m$. Suppose $A \in \Gamma(\can^m)$. Like in the case $m=3$ there exists a unique smooth real-valued function $a$ on $SM$ lying in $\mathcal{H}_{-m}\oplus \mathcal{H}_m$, so that $\pi^*A=(V(a)/m+\i a)\omega^m$. In particular, to a triple $(g,A,\theta)$ we may associate the thermostat $F=X+(a-V\theta)V$. We now have:

\begin{Theorem}Let $M$ be a closed oriented surface and $(g,A,\theta)$ be a triple satisfying
$$\ov{\partial}A=\left(\frac{m-1}{2}\right)\left(\theta-i\star_g\theta\right)\otimes A\quad \text{and} \quad
K_g-\delta_{g}\theta+(2-m)|A|^{2}_{g}\leqslant 0.$$ Let $F$ denote the vector field of the thermostat determined by $(g,A,\theta)$.
Suppose there is a 1-form $\beta\in \Omega^1(M)$ and a function $u\in C^{\infty}(SM)$ such that
\[Fu=Va+\beta.\]
Then $A=0$ and $\beta$ is exact.
\label{thm:noweylV3}
\end{Theorem}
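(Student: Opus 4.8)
The plan is to transport the problem to the vertical Fourier decomposition~\eqref{eq:vertfourier}, to apply a Pestov-type $L^2$-energy identity for the thermostat $F$ in the spirit of~\cite{MR1863022,MR2486586,arXiv:1706.03554}, and to finish with a short Fourier-mode computation. Write $w:=Va+\beta$. Since $A\in\Gamma(\can^m)$ we have $Va\in\mathcal H_{-m}\oplus\mathcal H_m$ while $\beta\in\mathcal H_{-1}\oplus\mathcal H_1$, so $w$ is supported in Fourier degrees $\pm 1$ and $\pm m$; the gap in degrees $2,\dots,m-1$, non-empty precisely because $m\geqslant 3$, will be decisive at the end.

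First I would set up the operators adapted to $F=X+\lambda V$ with $\lambda=a-V\theta$. Put $\widetilde{H}:=[V,F]=H+(V\lambda)V$; the brackets $[V,H]=-X$, $[X,H]=K_gV$, $[V,X]=H$ then give
\[
[F,\widetilde{H}]=-\lambda F-(V\lambda)\widetilde{H}+\mathbb K\,V,\qquad\mathbb K=K_g+XV\lambda-H\lambda+\lambda\,VV\lambda+\lambda^2,
\]
so that $\mathbb K$ plays the role of the Gaussian curvature of the thermostat. Applying $V$ to the transport equation yields $F(Vu)=Vw-\widetilde{H}u$, and feeding this into the energy identity of~\cite{MR2486586} produces a second expression for $\|F(Vu)\|^2$ in terms of $\|\widetilde{H}u\|^2$, the curvature pairing $-\langle\mathbb K\,Vu,Vu\rangle$, explicit quantities built from $w$, and first-order correction terms carrying no a priori sign. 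This is where the two hypotheses enter. The twisted holomorphicity $\ov{\partial}A=\tfrac{m-1}{2}(\theta-i\star_g\theta)\otimes A$ lets one rewrite the $\eta_\pm$-derivatives of $a$ occurring in the correction terms (here $\eta_\pm:\mathcal H_k\to\mathcal H_{k\pm1}$ are the components of $X$, so $X=\eta_++\eta_-$) as $\theta$-multiples of $a$; after this substitution the correction terms and the relevant Fourier modes of $\mathbb K$ coalesce, and the estimate, projected onto the mode $\mathcal H_k$ with $|k|\geqslant m$ (where $w$ contributes nothing), says that a non-negative quantity built from $u_k$ and its derivatives is dominated by a universal positive multiple of $\bigl(K_g-\delta_g\theta+(2-m)|A|^2_g\bigr)\,\|u_k\|^2$, which is $\leqslant 0$ by the curvature hypothesis.

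Summing over $|k|\geqslant m$ then forces $u_k=0$ for all such $k$, i.e.\ $u=\sum_{|k|\leqslant m-1}u_k$ with $u_{-k}=\ov{u_k}$, exactly as in the tensor tomography argument on Anosov surfaces~\cite{MR1863022}. I expect this step to be the main obstacle: for a genuine thermostat, as opposed to a geodesic flow, the energy identity carries indefinite first-order terms, and it is only the precise interplay of the two hypotheses --- with $m\geqslant 3$ making the curvature coefficients on the high modes land on the favourable side --- that closes the estimate.

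The remainder is bookkeeping with~\eqref{eq:transportpde}. In $Fu=Xu+aVu-(V\theta)Vu=w$, comparing Fourier modes in degrees $m+1,\dots,2m-1$ --- where $w$ vanishes and, since $\deg u\leqslant m-1$, only $aVu$ contributes --- forces the product of the $\mathcal H_m$-component of $a$ with $u_k$ to vanish for $1\leqslant k\leqslant m-1$. If $A\not\equiv0$, that component of $a$ vanishes only over the zeros of $A$, which are isolated by the twisted holomorphicity, so $u_k=0$ for $1\leqslant k\leqslant m-1$ and $u\in\mathcal H_0$ is the pullback of some $u_0\in C^{\infty}(M)$; but then $Fu=Xu_0=\d u_0$ has degree $1$, so the degree-$m$ part of $w$, proportional to the $\mathcal H_m$-component of $a$, must vanish, forcing $A=0$, a contradiction. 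Hence $A=0$ and the equation reduces to $Xu-(V\theta)Vu=\beta$. Comparing now the degree-$m$, then degree-$(m-1)$, \dots, then degree-$2$ parts in turn, one finds that each $u_k$ with $1\leqslant k\leqslant m-1$ satisfies an equation in which $\eta_+u_k$ is a $\theta$-dependent pointwise multiple of $u_k$; upon conjugation this exhibits $\ov{u_k}$ as a holomorphic section, for a holomorphic structure twisted by $\theta$, of a line bundle of negative degree --- this is where $\chi(M)<0$ is used --- whence $u_k=0$. Therefore $u$ is the pullback of $u_0\in C^{\infty}(M)$ with $\d u_0=\beta$, so $A=0$ and $\beta$ is exact.
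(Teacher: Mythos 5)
Your overall strategy is genuinely different from the paper's: you try to first prove a degree bound ($u_k=0$ for $|k|\geqslant m$) and then finish by Fourier bookkeeping, whereas the paper never controls the degree of $u$ at all. It applies the identity \eqref{eq:l2} once, with the specific choice $c=\theta+V(a)/m$, uses the lemma converting the hypothesis $\ov{\partial}A=\tfrac{m-1}{2}(\theta-\i\star_g\theta)\otimes A$ into $XVa-mHa-(m-1)(\theta Va-maV\theta)=0$, normalises $\beta$ to be divergence free, and after completing squares obtains $-2m^3\|a\|^2-\|\beta-aVu\|^2-\|V\beta-VaVu/m\|^2\geqslant 0$, whence $a=0$ and $\beta=0$ directly. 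The decisive gap in your proposal is precisely the step you yourself flag as the ``main obstacle'': you assert that the energy identity, after using the two hypotheses, can be ``projected onto the mode $\mathcal H_k$ with $|k|\geqslant m$'' to give a sign-definite inequality involving only $u_k$. No argument is given, and as stated this does not work: the identity \eqref{eq:l2} is a single global $L^2$ identity, and the operators appearing in it ($F$, $H_c$ or $\widetilde H$, multiplication by $\lambda=a-V\theta$, by $a$, and by the thermostat curvature $\mathbb K$, which contains terms of vertical degree up to $m$) do not preserve the decomposition \eqref{eq:vertfourier}; they shift and mix modes by up to $m$, so there is no mode-by-mode inequality of the form ``nonnegative quantity in $u_k$ $\leqslant$ (curvature factor)$\,\|u_k\|^2$''. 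Establishing finite degree for solutions of a thermostat transport equation is exactly the hard tensor-tomography-type statement (in the geodesic case it needs the full Guillemin--Kazhdan/Pestov machinery with the ladder operators $\eta_\pm$, and for thermostats additional structure), and the paper's choice of $c$ is designed to avoid it entirely. Without a proof of this step your argument does not get off the ground.

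A second, independent problem is your final step for the exactness of $\beta$: after reducing to $A=0$ you invoke line bundles of negative degree, ``this is where $\chi(M)<0$ is used''. But the theorem makes no assumption on $\chi(M)$; its hypotheses are consistent with $\chi(M)=0$ (for instance a flat torus with $A=0$ and $\theta$ harmonic, where $K_g-\delta_g\theta\equiv 0$), and there the bundles $\can^k$ have degree zero and admit nonvanishing holomorphic sections, so your vanishing argument for the $u_k$, $1\leqslant k\leqslant m-1$, breaks down. (Your intermediate bookkeeping — using the modes $m+1,\dots,2m-1$ to get $a_mu_k\equiv0$, isolated zeros of $A$ from the twisted holomorphicity, and then reading off the degree-$m$ mode to force $A=0$ — is fine, but it only becomes available after the unproven degree bound.) So as it stands the proposal has a genuine gap in its central estimate and, even granting that, proves a weaker statement than the theorem.
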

Let us first verify that this gives the desired statement. 
\begin{Corollary}\label{cor:nonweylpropconvex}
Let $(M,\mathfrak{p})$ be a closed oriented properly convex projective surface with $\chi(M)<0$ and with $\mathfrak{p}$ containing a Weyl connection $\weyl$. Then $\mathfrak{p}$ is hyperbolic and moreover $\weyl$ is the Levi-Civita connection of the hyperbolic metric.  
\end{Corollary}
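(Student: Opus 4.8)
The plan is to show that properly convex projective structures fall within the scope of Theorem~\ref{thm:noweylV3} with $m=3$ and $\theta=0$, and then simply read off the conclusion. Equip $M$ with the Riemann surface structure induced by $[g]$ and the orientation, where, by the theorem of Labourie and Loftin, $(g,A,0)$ is the unique triple encoding $\mathfrak{p}$ with $A\in\Gamma(\can^3)$ and
$$
K_g=-1+2|A|^2_g,\qquad \ov{\partial}A=0.
$$
Let $a\in C^\infty(SM)$ be the function of $\mathcal{H}_{-3}\oplus\mathcal{H}_3$ representing the imaginary part of $A$, so that $F=X+aV$ is the projective thermostat of $(g,A,0)$, whose orbits project to $M$ as the geodesics of $\mathfrak{p}$. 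Since $\mathfrak{p}$ contains the Weyl connection $\weyl$, there are a metric $\hat{g}$ and a $1$-form $\alpha$ with $\weyl={}^{\hat{g}}\nabla+\hat{g}\otimes\alpha^{\sharp}-\mathrm{Sym}(\alpha)$, and the unparametrised geodesics of $\weyl$ coincide with those of $F$. Hence Corollary~\ref{cor:transportweyl} (in the case $\theta=0$) produces a function $u\in C^\infty(SM)$ with $Fu=Va+\beta$, where $\beta=-\alpha\in\Omega^1(M)$.

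It remains to verify the two hypotheses of Theorem~\ref{thm:noweylV3}. With $m=3$ and $\theta=0$ the first is simply $\ov{\partial}A=0$, which holds. The second reads $K_g-|A|^2_g\leqslant 0$, i.e.\ $|A|^2_g\leqslant 1$, and this follows from a maximum principle: since $A$ is a holomorphic cubic differential and $K_g=-\ee^{-2\phi}\Delta_0\phi$ in an isothermal coordinate $g=\ee^{2\phi}|\d z|^2$ (with $\Delta_0$ the flat Laplacian), a standard computation gives $\Delta_g\log|A|^2_g=6K_g$ away from the isolated zeros of $A$. If $A\not\equiv 0$, the maximum of $|A|^2_g$ over the closed surface $M$ is attained at a point where $A\neq 0$, and there $\Delta_g\log|A|^2_g\leqslant 0$ forces $K_g\leqslant 0$, whence $|A|^2_g=\frac{1}{2}(K_g+1)\leqslant\frac{1}{2}$ at that point, hence everywhere; the case $A\equiv 0$ is trivial. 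Thus $K_g-|A|^2_g\leqslant-\frac{1}{2}<0$ on $M$, and Theorem~\ref{thm:noweylV3} applies.

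We conclude that $A=0$, so $\mathfrak{p}$ is encoded by $(g,0,0)$; in particular $K_g\equiv-1$, so $g$ is hyperbolic and $\mathfrak{p}$ is the hyperbolic projective structure. Finally, $\weyl$ and the Levi-Civita connection ${}^g\nabla$ of the hyperbolic metric $g$ are both Weyl connections (the latter for $[g]$ with vanishing $1$-form), and both lie in $\mathfrak{p}$, hence are projectively equivalent; since $\chi(M)<0$, Corollary~\ref{cor:georig} yields $\weyl={}^g\nabla$, so $\weyl$ is the Levi-Civita connection of the hyperbolic metric. The substance of the argument lies entirely in Theorem~\ref{thm:noweylV3}; the corollary is an assembly of earlier results, the only non-formal ingredient being the pointwise bound $|A|^2_g\leqslant\frac{1}{2}$, which is the classical maximum-principle estimate for a holomorphic cubic differential subordinate to the affine sphere equation $K_g=-1+2|A|^2_g$. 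Accordingly I do not expect a serious obstacle here beyond keeping the identifications ($(g,A,0)$, the thermostat, and the pair $(\hat g,\alpha)$ for $\weyl$) straight.
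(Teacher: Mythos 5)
Your proposal is correct and follows essentially the same route as the paper: encode $\mathfrak{p}$ by the Labourie--Loftin triple $(g,A,0)$, obtain the transport equation from Corollary~\ref{cor:transportweyl}, verify the hypotheses of Theorem~\ref{thm:noweylV3} with $m=3$, $\theta=0$, and finish with Corollary~\ref{cor:georig}. The only difference is that where the paper cites Calabi's theorem for the bound $K_g\leqslant 0$ (equivalently $|A|^2_g\leqslant\tfrac{1}{2}$), you prove it directly via the maximum principle applied to $\log|A|^2_g$, which is a correct, self-contained substitute for that citation.
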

\begin{proof}
By a result of Calabi~\cite{MR0365607}, if $m=3$ and $(g,A)$ satisfy
$$
K_g=-1+2|A|^2_g \quad \text{and} \quad \ov{\partial} A=0,
$$
then $K_g\leqslant 0$. In particular, the triple $(g,A,0)$ satisfies the assumptions of Theorem~\ref{thm:noweylV3} and Corollary~\ref{cor:transportweyl} implies that we have a solution $u$ to the transport equation $Fu=Va+\beta$. Hence the theorem gives right away that $A$ vanishes identically and hence $\mathfrak{p}$ is hyperbolic. In particular, the Levi-Civita connection ${}^g\nabla$ of the hyperbolic metric and the connection $\weyl$ both lie in $\mathfrak{p}$ and hence are projectively equivalent, but this can happen if and only if ${}^{g}\nabla=\weyl$, by Corollary~\ref{cor:georig}.   
\end{proof}

\begin{Remark}
In~\cite{arXiv:1609.08033} the notion of a minimal Lagrangian connection is introduced. These are torsion-free connections on $TM$ of the form $\nabla=\weyl+B$ where $(g,A,\theta)$ defining $\weyl$ and $B$ are subject to the equations
$$
K_g-\delta_g\theta=-1+2|A|^2_g, \qquad \ov{\partial} A=\left(\theta-\i\star_g\theta\right)\otimes A, \qquad \d\theta=0. 
$$
In particular, on a closed oriented surface of negative Euler characteristic every properly convex projective structure arises from a minimal Lagrangian connection. Another immediate consequence of Theorem~\ref{thm:noweylV3} and Corollary~\ref{cor:georig} thus is:
\end{Remark}
\begin{Corollary}\label{cor:nonweylminlag}
Let $M$ be a closed oriented surface of negative Euler characteristic and $\nabla$ a minimal Lagrangian connection arising from the triple $(g,A,\theta)$. Suppose $|A|^2_g\leqslant 1$ and that $\nabla$ is projectively equivalent to a Weyl connection $\weyl$. Then $A$ vanishes identically and hence $\nabla=\weyl$.  
\end{Corollary}

In order to show the theorem we use the following $L^2$ identity proved in \cite[Equation (5)]{MR2486586} which is in turn an extension of an identity in \cite{MR1863022} for geodesic flows. The identity holds for arbitrary thermostats $F=X+\lambda V$. If we let $H_{c}:=H+cV$ where $c:SM\to\re$ is any smooth function then
\begin{equation}
2\langle H_{c}u,VFu\rangle=\|Fu\|^{2}+\|H_{c}u\|^{2}-\langle Fc+c^{2}+K_g-H_{c}\lambda+\lambda^{2},(Vu)^{2}\rangle,
\label{eq:l2}
\end{equation}
where $u$ is any smooth function. All norms and inner products are $L^{2}$ with respect to the volume form $\Theta$.

We also need the following lemma whose proof is a straightforward calculation (see~\cite[Lemma 4.1]{arXiv:1706.03554} for a proof).

\begin{Lemma}
We have
$$
\ov{\partial}A=\left(\frac{m-1}{2}\right)(\theta-\i\star_g\theta)\otimes A
$$
if and only if
$$
XVa-mHa-(m-1)(\theta Va-maV\theta)=0. 
$$
\end{Lemma}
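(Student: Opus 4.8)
The plan is to rewrite both sides of the tensorial identity as scalar equations on $SM$ in the moving‑frame picture of Section~2, and then to observe that the resulting complex scalar identity is equivalent to its real part alone. For the left‑hand side: since $a\in\mathcal{H}_{-m}\oplus\mathcal{H}_m$ we have $VVa=-m^2a$, so the function $a_m:=\tfrac1m Va+\i a$ satisfies $Va_m=\i m a_m$, i.e.\ $a_m\in\mathcal{H}_m$, and $\pi^*A=a_m\,\omega^m$ with $\omega=\omega_1+\i\omega_2$. For the right‑hand side: on an oriented Riemannian surface $\star_g\theta=-J\theta$, hence $\theta-\i\star_g\theta=\theta+\i J\theta=2\theta''$, the $(0,\!1)$‑part of $2\theta$, which by the discussion of Weyl connections in Section~2 is represented on $SM$ by $2\theta_{-1}$ with $\theta_{-1}=\tfrac12(\theta+\i V\theta)$. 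Since the representative of a tensor product of line‑bundle sections is the product of the representatives, the section $\tfrac{m-1}{2}(\theta-\i\star_g\theta)\otimes A$ of $\can^m\otimes\ov{\can}$ is represented by $(m-1)\,\theta_{-1}\,a_m$.

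Next I would compute $\ov{\partial}A$ in this picture. Here $\ov{\partial}$ is the $(0,\!1)$‑part $\D''$ of any conformal connection on $\can^m$ (all of which share the same $\D''$, by Section~2). Writing $\d a_m=(Xa_m)\omega_1+(Ha_m)\omega_2+(Va_m)\psi$ and substituting $\omega_1=\tfrac12(\omega+\ov{\omega})$ and $\omega_2=\tfrac{\i}{2}(\ov{\omega}-\omega)$, one gets $\d a_m=\tfrac12(Xa_m-\i Ha_m)\,\omega+\tfrac12(Xa_m+\i Ha_m)\,\ov{\omega}+(Va_m)\psi$; comparing with the structure equation~\eqref{eq:struceqsecmell} (the $\psi$‑terms matching because $Va_m=\i m a_m$) shows that $\ov{\partial}A=\D''A$ is represented by $\tfrac12(Xa_m+\i Ha_m)$. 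Therefore the identity $\ov{\partial}A=\tfrac{m-1}{2}(\theta-\i\star_g\theta)\otimes A$ is equivalent to the vanishing on $SM$ of
$$E:=\tfrac12\bigl(Xa_m+\i Ha_m\bigr)-(m-1)\,\theta_{-1}\,a_m,$$
and, being the representative of a section of $\can^m\otimes\ov{\can}$, the function $E$ lies in $\mathcal{H}_{m-1}$, i.e.\ $VE=\i(m-1)E$.

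Finally I would substitute $a_m=\tfrac1m Va+\i a$ and $\theta_{-1}=\tfrac12(\theta+\i V\theta)$ into $E$ and separate real and imaginary parts; a direct expansion gives
$$2m\,\Re E=XVa-mHa-(m-1)\bigl(\theta Va-m a V\theta\bigr),$$
so that $\Re E=0$ is precisely the stated transport equation. To finish, it remains to see that $E=0\iff\Re E=0$: writing $E=\Re E+\i\,\Im E$ in $VE=\i(m-1)E$ yields $V(\Re E)=-(m-1)\Im E$ and $V(\Im E)=(m-1)\Re E$, and since $m\geqslant 3$ this forces $\Re E=0\iff\Im E=0$. Chaining these equivalences proves the lemma in both directions. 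The one step that calls for care is the identification of $\ov{\partial}A$ with $\tfrac12(Xa_m+\i Ha_m)$: one must keep accurate track of the conformal connection form on $\can^m$ after reduction to $SM$ and of the identification $\Omega^{0,1}(\can^m)\cong\can^m\otimes\ov{\can}$, so as to pin down the constant and the sign. Everything else — including the reduction to the real part via the weight of $E$, which is what lets a single real equation encode the complex tensorial one — is routine.
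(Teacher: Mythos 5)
Your proof is correct and takes exactly the route the paper's framework sets up (the paper itself omits the computation, citing only \cite{arXiv:1706.03554}): represent sections of $\can^m\otimes\ov{\can}$ by equivariant functions on $SM$, identify $\ov{\partial}A$ with $\tfrac12(Xa_m+\i Ha_m)$ via the structure equation~\eqref{eq:struceqsecmell}, and use the weight relation $VE=\i(m-1)E$ to reduce the complex identity to its real part. All constants and signs check out — including the convention $\star_g\theta=-J\theta$, which is in any case forced by type considerations since $\ov{\partial}A$ is $(0,\!1)$-valued — and the final expansion does reproduce $2m\,\Re E=XVa-mHa-(m-1)(\theta Va-maV\theta)$.
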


\begin{proof}[Proof of Theorem~\ref{thm:noweylV3}] Without loss of generality we may assume that $\beta$ has zero divergence. Indeed if not, a standard application of scalar elliptic PDE theory shows that we can always
find a smooth function $h$ on $M$ such that $\beta+dh$ has zero divergence. Now note that
$F(u+h)=Va+\beta+dh$.

A calculation shows that if we pick $c=\theta+V(a)/m$, then
$$
Fc+c^{2}+K_g-H_{c}\lambda+\lambda^{2}=K_g-\delta_{g}\theta+(1-m)|A|_{g}^{2},
$$
where we use that
$$
\pi^*|A|^2_g=(Va)^2/m^2+a^2\quad \text{and} \quad \pi^*\delta_g\theta=-\left(X\theta+HV\theta\right),
$$
hence for this choice of $c$, \eqref{eq:l2} simplifies to
\begin{multline}
2\langle H_{c}u,VFu\rangle-\||A|_{g}Vu\|^{2}\\
=\|Fu\|^{2}+\|H_{c}u\|^{2}-\langle K_{g}-\delta_{g}\theta+(2-m)|A|_{g}^{2},(Vu)^2\rangle.
\label{eq:l44}
\end{multline}
If $Fu=Va+\beta$, then $VFu=-m^{2}a+V\beta$. Using that $X$ and $H$ preserve $\Theta$ and that
$XVa-mHa-(m-1)(\theta Va-maV\theta)=0$ we compute
\begin{align*}
2\left\langle H_{c}u,-m^{2}a\right\rangle&=-2m^{2}\langle Hu,a\rangle-2m^{2}\langle cVu,a\rangle\\
&=2m^{2}\langle u,Ha\rangle -2m^{2}\langle cVu,a\rangle\\
&=-2m^{2}\langle Xu,V(a)/m\rangle-2m(m-1)\langle u, \theta Va-maV\theta\rangle\\
&\phantom{=}\;-2m^{2}\langle cVu,a\rangle\\
&=-2m\|Va\|^{2}=-2m^3\|a\|^2,
\end{align*}
where the last equation is obtained using that $Xu=\beta+Va-(a-V\theta)Vu$, $\langle \beta,Va\rangle=0$ and $c=\theta+V(a)/m$.

Using that $X$ and $H$ preserve $\Theta$ and that
$X\beta+HV\beta=0$ ($\beta$ is assumed to have zero divergence) we compute:
\begin{align*}
2\left\langle H_{c}u,V\beta\right\rangle&=2\langle Hu,V\beta\rangle+2\langle cVu,V\beta\rangle\\
&=-2\langle u,HV\beta\rangle +2\langle cVu,V\beta\rangle\\
&=-2\langle Xu,\beta\rangle+2\langle cVu,V\beta\rangle\\
&=-2\|\beta\|^{2}+2\langle (a-V\theta)Vu,\beta\rangle+2\langle cVu,V\beta\rangle\\
&=-2\|\beta\|^2+2\langle aVu,\beta\rangle+2\langle (Va Vu)/m,V\beta\rangle,
\end{align*}
where the penultimate equation is obtained using that $Xu=\beta+Va-(a-V\theta)Vu$ and $\langle \beta,Va\rangle=0$. The last equation uses that $c=\theta+V(a)/m$ and
\[V(\theta V\beta-V\theta\beta)=0.\]

Inserting these calculations back into \eqref{eq:l44}, we derive
\begin{multline*}
-2m^3\|a\|^2-2\|\beta\|^2+2\langle aVu,\beta\rangle+2\langle (Va Vu)/m,V\beta\rangle-\||A|_{g}Vu\|^{2}\\
=\|Fu\|^{2}+\|H_{c}u\|^{2}-\langle K_{g}-\delta_{g}\theta+(2-m)|A|_{g}^{2},(Vu)^2\rangle.
\end{multline*}
Since $|A|_{g}^2=a^2+(Va)^{2}/m^2$ this can be re-written as
\begin{multline*}
-2m^3\|a\|^2-\|\beta-aVu\|^2-\|V\beta-VaVu/m\|^{2}\\
=\|Fu\|^{2}+\|H_{c}u\|^{2}-\langle K_{g}-\delta_{g}\theta+(2-m)|A|_{g}^{2},(Vu)^2\rangle,
\end{multline*}
where we have used that $\|\beta\|^2=\|V\beta\|^2$. By hypothesis the right hand side is $\geqslant 0$ which gives right away that $a=\beta=0$.
\end{proof}

\appendix

\section{Deriving the transport equation}\label{app:1}

Here we sketch how to derive the transport equation for the function $u$ starting from the PDE
$$
\D^{\prime\prime}\mu-\mu\,\D^{\prime}\mu=\Phi\mu^3+\ov{\Phi}
$$
for the Beltrami differential $\mu$. Let $(g,A,\theta)$ be the triple encoding $\mathfrak{p}$ so that the connection form of $\D$ on $SM$ is (see~\eqref{eq:defconfconmatrix}) $\kappa=\i\psi-2\theta_1\omega$, where we write $\theta_1=\frac{1}{2}(\theta-\i V\theta)$. Moreover, on $SM$ the section $\Phi$ of $K^2\otimes \ov{\acan}$ is represented by $a_3=\frac{1}{3}Va+\i a$, where $a(v)=\Re A(Jv,Jv,Jv)$, $v \in SM$. Writing $\mu_{-2}$ for the complex-valued function on $SM$ representing the Beltrami differential $\mu$ and $\mu_2=\ov{\mu_{-2}}$, the PDE for $\mu$ is equivalent to
$$
\d \mu_{-2}=\mu_{-2}^{\prime}\omega+\left(\mu_{-2}\mu_{-2}^{\prime}+a_3\mu_{-2}^3+\ov{a_3}\right)\ov{\omega}+\ov{\kappa}\mu_{-2}-\kappa\mu_{-2},
$$
where $\mu_{-2}^{\prime}$ is a complex-valued function on $SM$. Since $\mu_{-2}$ represents a section of $\ov{K}\otimes \acan\simeq K^{-2}$, writing $\eta_{\pm}=\frac{1}{2}\left(X\mp \i H\right)$ we also have
$$
\d \mu_{-2}=\eta_+(\mu_{-2})\omega+\eta_{-}(\mu_{-2})\ov{\omega}-2 \i \mu_{-2} \psi.
$$
Thus the PDE is equivalent to the system
\begin{equation}\label{eq:appendixone}
\eta_{-}\mu_{-2}-\mu_{-2}\eta_+\mu_{-2}=a_3\mu_{-2}^3-2\mu_{-2}^2\theta_1-2\mu_{-2}\ov{\theta_{1}}+\ov{a_3}
\end{equation}
and $V\mu_{-2}=-2\i\mu_{-2}$. The Beltrami differential does only define a conformal equivalence class $[\hat{g}]$. We may fix a metric $\hat{g}\in [\hat{g}]$ by requiring
$$
\frac{1}{2}\left(p+q\right)=\frac{1+|\mu_2|^2}{\left(1-|\mu_2|^2\right)^4},
$$
where again we specify the metric $\hat{g}$ in terms of the functions $p,q,r$. Explicitly, we have
$$
\frac{1}{2}(p-q)=\frac{\mu_{-2}+\mu_2}{\left(1-|\mu_2|^2\right)^4} \quad \text{and} \quad r=\frac{\i(\mu_2-\mu_{-2})}{\left(1-|\mu_2|^2\right)^4}.
$$
In particular, this yields
$$
h:=\frac{p}{(pq-r^2)^{2/3}}=(\mu_{-2}+1)(\mu_2+1).
$$
Writing $F=X+(a-V\theta)V$ and using~\eqref{eq:appendixone}, a lenghty but straightforward calculation shows that
$$
Fh=\frac{2}{3}hVa+2h\Re\left(a_3\mu_{-2}^2-\mu_2a_{-3}-2\mu_2\theta_{-1}+\eta_+\mu_{-2}\right).
$$
Hence if we define $u=\frac{3}{2}\log h$, then we obtain
$$
Fu-Va=3\Re\left(a_3\mu_{-2}^2-\mu_2a_{-3}-2\mu_2\theta_{-1}+\eta_+\mu_{-2}\right)
$$
Note that the right hand side of the last equation lies in $\mathcal{H}_{-1}\oplus \mathcal{H}_1$, hence there exists a $1$-form $\beta$ on $M$ so that
$$
Fu=Va+\beta
$$
which is the transport equation~\ref{eq:transportpde}. 

%\bibliography{ref}
%\bibliographystyle{tms}

\providecommand{\noopsort}[1]{}
\providecommand{\mr}[1]{\href{http://www.ams.org/mathscinet-getitem?mr=#1}{MR~#1}}
\providecommand{\zbl}[1]{\href{http://www.zentralblatt-math.org/zmath/en/search/?q=an:#1}{Zbl~#1}}
\providecommand{\arxiv}[1]{\href{http://www.arxiv.org/abs/#1}{arXiv:#1}}
\providecommand{\doi}[1]{\href{http://dx.doi.org/#1}{DOI}}
\providecommand{\MR}{\relax\ifhmode\unskip\space\fi MR }
% \MRhref is called by the amsart/book/proc definition of \MR.
\providecommand{\MRhref}[2]{%
  \href{http://www.ams.org/mathscinet-getitem?mr=#1}{#2}
}
\providecommand{\href}[2]{#2}

\end{document}